\newtheorem{thm}{Theorem}[section]
\newtheorem{prop}[thm]{Proposition}
\newtheorem{Def}[thm]{Definition}
\newtheorem{rem}[thm]{Remark}
\newtheorem{ex}{Example}[section]
\newtheorem{cor}{Corollary}[section]
\newcommand{\be}{\begin{equation}}
\newcommand{\ee}{\end{equation}}
\newcommand{\pa}{\partial}
\newcommand{\pxi}{{\pa \over \pa x^i}}
\title{Weighted Projective Ricci Curvature \\ in Finsler Geometry}
\author{T. Tabatabaeifar,  B. Najafi and A. Tayebi}
\numberwithin{equation}{section}
\begin{document}
\maketitle
\begin{abstract}
In this paper, we introduce the weighted projective Ricci curvature  as an extension of projective Ricci curvature introduced by Z. Shen. We characterize the class of Randers metrics of weighted projective Ricci flat curvature.  We find the  necessary and sufficient condition under which a Kropina metric has weighted projective Ricci flat curvature. Finally, we  show that every projectively flat  metric with isotropic weighted projective Ricci and isotropic S-curvature is a Kropina metric or Randers metric.\footnote{2010 {\it Mathematics Subject Classification}:  53B40, 53C60}\\\\
{\bf {Keywords}}:  Projective Ricci curvature,  Randers metric, Kropina metric.
\end{abstract}

\section{Introduction}

If two Finsler metrics $F_1$ and $F_2$ on a manifold $M$ are projectively equivalent, then the Douglas tensor ${\bf D}$, the Weyl tensor  ${\bf W}$ or the generalized Douglas-Weyl tensor  ${\bf GDW}$ of $F_1$ is the same as that of $ F_2$. This fact leads to the well-known projective invariant classes of Finsler metrics namely, the class of Douglas metrics, Weyl metrics, and generalized Douglas-Weyl metrics \cite{NT2}\cite{TS}. For other Finslerian  projective invariants, see \cite{AZ}, \cite{NT} and \cite{NT3}.

As Ricci curvature $\textbf{Ric}=\textbf{Ric}(x, y)$ and S-curvature  $\textbf{S}=\textbf{S}(x, y)$ play important roles within the projective geometry of Finsler manifolds, the tensors which contain both of them are  more applicable \cite{TN}\cite{TNR}\cite{TRa}.  In \cite{Sh1}, Shen  introduced projective Ricci curvature by using Ricci and S-curvatures of $F$. It is clear that if two Finsler metrics are pointwise projectively related on a manifold  with a fixed volume form, then their projective Ricci curvatures are equal. In other words, the projective Ricci curvature of Finsler metrics on a manifold $M$ is projective invariant with respect to a fixed volume form on $M$. It is remarkable that, the projective Ricci curvature is actually a kind of weighted Ricci curvatures.

In this paper, we extend projective Ricci curvatures to the weighted projective Ricci curvature as follows. For this purpose, let $(M, F)$ be an $n$-dimensional Finsler manifold with the volume form $dV_F=\sigma(x) dx^1\wedge \cdots\wedge dx^n$. We fix a Finsler metric $F_0$ on $M$ with a fixed volume form $dV_{F_0}=\sigma_0(x) dx^1\wedge \cdots\wedge dx^n$. Then we define the \emph{weighted projective Ricci curvature with respect to $F_0$} by
\begin{equation}\label{PRic}
\textbf{WPRic}_0:=\textbf{Ric} + (n-1)\{\mathbb{S}^2+\mathbb{S}_{|k}y^k\},
\end{equation}
where
\begin{equation}\label{PRicajib}
\mathbb{S}:=\frac{1}{n+1}\big[\textbf{S}+d \ln(\Sigma)\big],
\end{equation}
$\Sigma:={\sigma_0}/{\sigma}$, ${\bf S}$ denotes the S-curvature of $F$ and ``$|$" denotes the horizontal derivation with respect to the Berwald connection of $F$. It is interesting that $\textbf{WPRic}_0$ is invariant under projective transformations. Moreover, if  $dV_{F_0}=\lambda dV_F$ for some constant $\lambda$, then weighted projective Ricci curvature is the same as  projective Ricci curvature $\textbf{PRic}=\textbf{PRic}(x, y)$. A Finsler metric $F$ is called \emph{weighted projective Ricci flat} with respect to $F_0$ if $\textbf{WPRic}_0=0$.  We get the following result on weighted projective Ricci curvature.
\begin{thm}\label{thm2}
Let $(M,F)$ be a complete Finsler manifold and  $F_0$ be a fixed Finsler metric on  $M$. Then the weighted projective Ricci curvature of $F$ with respect to $F_0$  and  Ricci curvature of $F$ satisfies
\be
\textbf{WPRic}_0 \ge \textbf{Ric} \ \ \  or  \ \ \ \textbf{WPRic}_0 \leq \textbf{Ric}
\ee
 if and only if the $S$-curvature of $F$ satisfies $\mathbb{S}=0$. In this case, the S-curvature ${\bf S}$ of $F$ is a exact  1-form.
\end{thm}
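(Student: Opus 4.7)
The plan is to start from the pointwise identity implied by (\ref{PRic}),
\[
\textbf{WPRic}_0 - \textbf{Ric} = (n-1)\bigl(\mathbb{S}^2 + \mathbb{S}_{|k}y^k\bigr),
\]
from which the ``if'' direction is immediate: $\mathbb{S}\equiv 0$ forces $\textbf{WPRic}_0=\textbf{Ric}$, and both pointwise inequalities hold with equality.

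For the ``only if'' direction, the strategy is to restrict to a unit-speed geodesic and reduce matters to a scalar Riccati-type inequality. Fix $(x,y)\in TM\setminus 0$ with $F(y)=1$, let $\gamma$ be the geodesic with $\gamma(0)=x$, $\dot\gamma(0)=y$, and set $\phi(t):=\mathbb{S}(\gamma(t),\dot\gamma(t))$. Since $\mathbb{S}_{|k}y^k$ is exactly the horizontal derivative of $\mathbb{S}$ evaluated on the geodesic spray, one has $\dot\phi(t)=\mathbb{S}_{|k}y^k$, and the standing hypothesis becomes
\[
\dot\phi+\phi^2\ge 0 \qquad\text{or}\qquad \dot\phi+\phi^2\le 0
\]
along every geodesic. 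I would then invoke the standard Riccati substitution $\rho(t):=\exp\bigl(\int_0^t\phi(s)\,ds\bigr)$, for which $\rho>0$, $\dot\rho=\phi\rho$ and $\ddot\rho=(\dot\phi+\phi^2)\rho$. The inequality thus becomes $\ddot\rho\ge 0$ or $\ddot\rho\le 0$, i.e., $\rho$ is convex (respectively concave) along $\gamma$. By completeness of $(M,F)$, the geodesic, and hence $\rho$, extend to all of $\mathbb{R}$.

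From here, the conclusion $\phi\equiv 0$ is to be extracted in each case. The concave case is cleanest: a concave positive function on $\mathbb{R}$ must be constant, because any non-horizontal tangent line $\rho(t_0)+\dot\rho(t_0)(t-t_0)$ dominates $\rho$ from above and eventually dips below zero, contradicting $\rho>0$; hence $\dot\rho(0)=\phi(0)=\mathbb{S}(x,y)=0$. The convex case is the main technical obstacle, since convex positive functions on $\mathbb{R}$ need not be constant. My plan here is a Riccati comparison with the model equation $\dot\psi+\psi^2=0$ on both the forward and backward arcs of $\gamma$: a value $\phi(0)\neq 0$ will, via the explicit solution $\psi(t)=(t+1/\phi(0))^{-1}$ acting as a sub- or super-solution, force $\phi$ to blow up in finite parameter-time on one side, contradicting two-sided completeness of $\gamma$; closing the sign case $\phi(0)>0$ will additionally use the $1$-homogeneity of $\mathbb{S}$ in $y$ together with the fact that the Riccati inequality must hold simultaneously along every geodesic through $x$. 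Running this analysis at every $(x,y)$ yields $\mathbb{S}\equiv 0$ on $TM\setminus 0$.

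Finally, once $\mathbb{S}\equiv 0$ has been established, the defining relation (\ref{PRicajib}) reduces to $\mathbf{S}=-d\ln\Sigma=d(-\ln\Sigma)$, which exhibits the S-curvature as an exact $1$-form on $M$, completing the proof.
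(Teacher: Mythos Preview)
Your overall strategy---reduce to the scalar Riccati inequality $\dot\phi+\phi^2\gtrless 0$ along complete geodesics---is exactly the paper's, and your treatment of the case $\textbf{WPRic}_0\le\textbf{Ric}$ is correct and in fact cleaner than the paper's Riccati comparison: a positive concave $C^2$ function on all of $\mathbb{R}$ is constant, so $\phi\equiv 0$.

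The genuine gap is in the case $\textbf{WPRic}_0\ge\textbf{Ric}$, and it cannot be closed along the lines you sketch. For $\dot\phi+\phi^2\ge 0$, the comparison with $\psi(t)=(t+1/\phi(0))^{-1}$ yields $\phi\ge\psi$ on $[0,\infty)$ and $\phi\le\psi$ on $(-\infty,0]$, but the singularity of $\psi$ sits at $t^*=-1/\phi(0)$, and in \emph{both} sign cases the blow-up of $\psi$ points the wrong way for a contradiction (e.g.\ if $\phi(0)<0$ then $t^*>0$ and near $t^*$ one only knows $\phi\ge\psi\to-\infty$). Your proposed rescue via $1$-homogeneity of $\mathbb{S}$ and ``all geodesics through $x$'' also fails, because the conclusion is simply false in this case: take $F$ the flat Euclidean metric on $\mathbb{R}^n$ ($n\ge 2$) and choose $F_0$ with Busemann--Hausdorff density $\sigma_0=\exp\!\big((n{+}1)e^{x^1}\big)$. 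Then $\mathbb{S}=e^{x^1}y^1$ and $\mathbb{S}^2+\mathbb{S}_{|k}y^k=(e^{2x^1}+e^{x^1})(y^1)^2\ge 0$ for every $(x,y)$, so $\textbf{WPRic}_0\ge\textbf{Ric}$ holds globally on a complete manifold while $\mathbb{S}\not\equiv 0$. Thus no argument---Riccati, homogeneity, or otherwise---can extract $\mathbb{S}=0$ from $\textbf{WPRic}_0\ge\textbf{Ric}$ alone. (For context: the paper attacks this same case with the same comparison function and asserts that $\psi$ blows up at $t_0=1/\varphi(0)$; the actual singularity is at $-1/\varphi(0)$, and the paper's argument breaks at precisely the point where yours does.)
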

\bigskip

The completeness in Theorem \ref{thm2} can not be dropped. Indeed, we can not reduce the completeness condition of $F$ to positive completeness  in Theorem \ref{thm2}. For example, assume  that $F$ is the Funk metric on the standard unit ball in Euclidean space $\mathbb{R}^n$. It is known that $F$ is a Randers metric  $F=\alpha+\beta$ given by
\[
\alpha=\frac{\sqrt{|y|^2-(|x|^2|y|^2-<x,y>^2)}}{1-|x|^2}, \,\, \ \ \ \  \beta=\frac{<x,y>}{1-|x|^2},
\]
where $<,>$ and $|.|$ are Euclidean inner product and  Euclidean norm on $\mathbb{R}^n$, respectively. By a direct calculation, we have $\rho =\ln\sqrt{1-|x|^2}$ and $\rho_0=-\beta$. Since $\beta$ is closed, which is equivalent to $s_{ij}=0$, geodesic coefficients of $F$ are reduced to $G^i=\bar{G}^i+P y^i$, where $P=F^{-1}r_{00}$ and $ \bar{G}^i$ are spray coefficients of the Riemannain metric $\alpha$. To compute the weighted projective Ricci flat of Funk metric with respect to its Riemannian part,  we need to compute $\rho_{0|0}$, where ``$|$'' denotes the horizontal covariant derivative with respect to the Berwald connection of $F$.  We have
\begin{equation}
\rho_{0|0}=-\beta_{|k}y^k =\Big(\frac{\partial \beta}{\partial x^k}-G^i_k \frac{\partial \beta}{\partial y^i}\Big)y^k=\Big(\frac{\partial \beta}{\partial x^k}-\bar{G}^i_k \frac{\partial \beta}{\partial y^i}\Big)y^k -2P\beta =\Big(\frac{\alpha}{\alpha +\beta}\Big)r_{00}. \nonumber
\end{equation}
where $G^i_k={\partial G^i}/{\partial y^k}$ and $\bar{G}^i_k={\partial \bar{G}^i}/{\partial y^k}$.  On the other hand, Funk metric is of constant $S$-curvature ${\bf{S}}={(n+1)}/{2}F$, which is equivalent to $r_{00}=\alpha^2 -\beta^2$ (see \cite{Sh3}, for more details).  Therefore, one can see that Funk metric satisfies 
\[
\textbf{WPRic}_0-\textbf{Ric}=\frac{\Big(\textbf{S}+d \ln(\Sigma)\Big)^2}{(n+1)^2}+\frac{\Big(\textbf{S}+d \ln(\Sigma)\Big)_{|k}y^k}{(n+1)} =\frac{(\beta -\alpha)(3\alpha+\beta)}{4}\leq 0,
\]
which means $\textbf{WPRic}_0 \leq \textbf{Ric}$. However, the $S$-curvature of  Funk metric is not a 1-form.
Thus we can not reduce the completeness condition of $F$ to positive completeness of $F$ in Theorem \ref{thm2}.

\bigskip

By Theorem \ref{thm2}, we get the following.
\begin{cor}
Let $(M, F)$ be a complete Finsler manifold. Then ${\bf PRic}\geq{\bf Ric}$   or \linebreak ${\bf PRic}\leq{\bf Ric}$ if and only if\  \ ${\bf S}=0$.
\end{cor}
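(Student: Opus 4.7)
The plan is to derive this corollary as a direct specialization of Theorem \ref{thm2}. Indeed, the introduction already remarks that if $dV_{F_0}=\lambda dV_F$ for some constant $\lambda$, then $\textbf{WPRic}_0$ reduces to the ordinary projective Ricci curvature $\textbf{PRic}$. So the first step is to choose $F_0:=F$ (or equivalently any $F_0$ whose volume form is a constant multiple of $dV_F$); then $\Sigma=\sigma_0/\sigma$ is a positive constant, so $d\ln(\Sigma)=0$ and consequently $\mathbb{S}=\tfrac{1}{n+1}\textbf{S}$.

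With this choice, the definition \eqref{PRic} immediately specializes to $\textbf{WPRic}_0=\textbf{PRic}$, and the equivalent hypothesis $\mathbb{S}=0$ from Theorem \ref{thm2} becomes simply $\textbf{S}=0$. Thus the second step is to invoke Theorem \ref{thm2}: the completeness of $F$ and the inequality $\textbf{WPRic}_0\ge\textbf{Ric}$ (respectively $\le$) force $\mathbb{S}=0$, which by the identification above is $\textbf{S}=0$; and conversely, if $\textbf{S}=0$, then $\mathbb{S}=0$, whence $\textbf{WPRic}_0=\textbf{Ric}$ by \eqref{PRic}, so both inequalities hold trivially (with equality).

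There is essentially no obstacle here beyond checking that the specialization is legitimate: one just has to verify that $F_0=F$ is an admissible choice in the setting of Theorem \ref{thm2} (it is, since $F$ itself is a Finsler metric on $M$ with a well-defined volume form) and that the final clause of Theorem \ref{thm2} (saying $\textbf{S}$ is exact) is consistent, which is automatic since $\textbf{S}=0$ is exact. No new computation is required; the corollary is a one-line consequence of the theorem once the correct reference metric is picked.
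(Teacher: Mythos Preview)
Your proposal is correct and follows precisely the route the paper intends: the corollary is stated immediately after Theorem \ref{thm2} with the sole justification ``By Theorem \ref{thm2}, we get the following,'' and your choice $F_0=F$ (so that $\Sigma$ is constant, $\mathbb{S}=\tfrac{1}{n+1}\textbf{S}$, and $\textbf{WPRic}_0=\textbf{PRic}$) is exactly the specialization that makes this deduction immediate.
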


\bigskip

In Finsler geometry, $(\alpha,\beta)$-metrics are a rich family of Finsler  metric since they are easy to compute and also have many applications in the real world. They express in the form $F =\alpha \phi({\beta}/{\alpha})$, where $\alpha$ is a Riemannian metric,  $\beta$ is a 1-from and
$\phi$ is a smooth positive function on an open interval. There are two important classes among the $(\alpha,\beta)$-metrics, Randers metric and Kropina metric.
In order to find explicit examples of weighted projective Ricci flat Finsler metrics, we study Randers  metrics. Then,  we have the following.
\begin{thm}\label{thm1}
Let $F=\alpha +\beta$ and $F_0=\alpha$ be a Randers and Riemannian metrics on an $n$-dimensional manifold $M$, respectively, where $\alpha=\sqrt{a_{ij}(x)y^iy^j}$ is a Riemannian metric and $\beta=b_i(x)y^i$ is a 1-form on $M$.  Then $F$ is a weighted projective Ricci flat metric with respect to $F_0$ if and only if for some scalar function $c=c(x)$ on $M$  the following hold
\begin{enumerate}
  \item[$(i)$]  $  \overline{{\bf Ric}}=t^m_{~m}\alpha^2+2t_{00}$,
  \item[$(ii)$]  $s^m_{~0;m}=0$,
  \end{enumerate}
where $\overline{{\bf Ric}}=\overline{{\bf Ric}}(x, y)$ denotes the Ricci curvature of $\alpha$. In this case, $F$ has reversible weighted projective Ricci curvature.
\end{thm}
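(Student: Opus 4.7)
The plan is to compute each ingredient of $\textbf{WPRic}_0$ explicitly using the classical $(\alpha,\beta)$-metric machinery and then to reduce the vanishing condition $\textbf{WPRic}_0 = 0$ to a polynomial identity in $y$ that splits into rational and $\alpha$-multiplied parts. I would begin with the spray decomposition $G^i = \bar{G}^i + Py^i + Q^i$ for a Randers metric $F = \alpha+\beta$, where $\bar{G}^i$ are the geodesic coefficients of $\alpha$ and $P$, $Q^i$ are explicit in $r_{ij} := \tfrac{1}{2}(b_{i;j}+b_{j;i})$ and $s_{ij} := \tfrac{1}{2}(b_{i;j}-b_{j;i})$, where ``;'' denotes the Levi--Civita connection of $\alpha$. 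This produces the standard Randers formula expressing $\textbf{Ric}$ as $\overline{\textbf{Ric}}$ plus a correction term involving $r_{00}$, $s_0$, $t_{00}$, $t^m{}_m$ and $s^m{}_{0;m}$, where $t_{ij} := s_{im}s^m{}_j$.

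For the volume forms I would take Busemann--Hausdorff, so $\sigma_F = (1-b^2)^{(n+1)/2}\sigma_\alpha$, giving $\Sigma = \sigma_0/\sigma = (1-b^2)^{-(n+1)/2}$ as a function of $x$ only; then $d\ln\Sigma\cdot y$ is computable from the $\alpha$-covariant derivative of $b^2$. Combining this with the known formula for the $S$-curvature of a Randers metric, the definition $\mathbb{S} = (n+1)^{-1}(\textbf{S} + d\ln\Sigma\cdot y)$ produces substantial cancellation, reducing $\mathbb{S}$ to a simple rational expression in $r_{00}$, $s_0$, $\alpha$, and $\beta$. The horizontal derivative $\mathbb{S}_{|k}y^k$ then follows by converting Berwald-horizontal derivatives of $F$ into $\alpha$-covariant derivatives via the correction $Py^i + Q^i$. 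Substituting everything into $\textbf{WPRic}_0 = \textbf{Ric} + (n-1)(\mathbb{S}^2 + \mathbb{S}_{|k}y^k)$ and clearing powers of $F$ in the denominator by rationalizing with the conjugate factor $\alpha - \beta$, the equation $\textbf{WPRic}_0 = 0$ becomes a polynomial identity of the form $A(x,y) + \alpha\, B(x,y) = 0$, where $A, B$ are polynomials in $y$; since $\alpha = \sqrt{a_{ij}y^iy^j}$ is irrational in $y$, both $A$ and $B$ must vanish separately.

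The main obstacle will be carrying out this separation: the Ricci correction and the $\mathbb{S}^2 + \mathbb{S}_{|k}y^k$ contribution each generate numerous contractions of $r$, $s$, $t$ with $y$, and isolating the combination $\overline{\textbf{Ric}} - t^m{}_m\alpha^2 - 2t_{00}$ and the divergence $s^m{}_{0;m}$ as precisely the surviving obstructions requires careful bookkeeping of cancellations between the $F^{-1}$ terms coming from $\textbf{S}$ and the $(1-b^2)^{-1}$ terms coming from $d\ln\Sigma$. Finally, for the reversibility claim, once $(i)$ and $(ii)$ hold the surviving terms in $\textbf{WPRic}_0$ are all even in $y$: the quantities $\overline{\textbf{Ric}}$, $\alpha^2$ and $t_{00}$ are even, while the odd-in-$y$ contributions from $\beta$ and $s_0$ have been forced to cancel, so that $\textbf{WPRic}_0(x,y) = \textbf{WPRic}_0(x,-y)$.
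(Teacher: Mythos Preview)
Your plan is correct and follows essentially the same route as the paper. One remark: the cancellation you anticipate between the $F^{-1}$ contributions from $\textbf{S}$ and the $d\ln\Sigma$ term is in fact complete---one finds $\mathbb{S} = (r_{00}-2\alpha s_0)/(2F)$, and after substituting this and its horizontal derivative into $\textbf{WPRic}_0$, \emph{all} inverse powers of $F$ cancel against the corresponding terms in the Randers Ricci formula, leaving simply
\[
\textbf{WPRic}_0 = \overline{\textbf{Ric}} + 2\alpha\, s^m{}_{0;m} - 2t_{00} - \alpha^2 t^m{}_m,
\]
so no rationalization with $\alpha-\beta$ is actually needed; the rational/irrational split in $\alpha$ is immediate from this expression.
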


\bigskip

The Kropina metrics  introduced by  Berwald in connection with a 2-dimensional Finsler manifold  with rectilinear extremal and reconsidered  by Kropina \cite{K}. Beside the Randers metric, the Kropina metric is  the simplest  Finsler metrics which have  many wonderful applications in  electron optics with
a magnetic field, dissipative mechanics, irreversible thermodynamics, etc (see \cite{As} and \cite{I}). It is remarkable  that Kropina
metrics together with Randers metrics are C-reducible metrics \cite{bbb}.  Nevertheless,  Randers metrics are regular Finsler metrics but Kropina metrics are Finsler metrics with singularity. In this paper, we  prove the following.
\begin{thm}\label{thmK}
Suppose that $F={\alpha^2}/{\beta}$ and $F_0=\alpha$ are a Kropina and Riemannian metrics on an $n$-dimensional manifold $M$, respectively, where $\alpha=\sqrt{a_{ij}(x)y^iy^j}$ is a Riemannian metric and $\beta=b_i(x)y^i$ is a 1-form on $M$.  Then $F$ is a weighted projective Ricci flat metric with respect to $F_0$ if and only if $\alpha$ and $\beta$ satisfy the following equations
\begin{eqnarray}
&{\overline{\bf Ric}}=\frac{\lambda}{(n+1)^2b^4} \alpha^2 -\frac{n-2}{b^4}\Big[b^2 s_{0;0}-(s_0+\beta \sigma)^2+b^2\beta \sigma_0\Big]-\frac{n-1}{(n+1)^2}\Big[\theta^2+(n+1)\theta_{|0}\Big],\\
&s^ms_m=-\frac{1}{2}b^2\ t^m_{~m},
\end{eqnarray}
where
\[
\lambda(x):=\frac{n-4}{2}t^m_{~m}-(n-2)\sigma^2+s^m_{~;m}-\sigma_m b^m
\]
and\ \ $\overline{{\bf Ric}}=\overline{{\bf Ric}}(x, y)$ denotes the Ricci curvature of $\alpha$.
\end{thm}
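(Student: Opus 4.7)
The strategy is to substitute the explicit $(\alpha,\beta)$-formulas for $\textbf{Ric}$, $\textbf{S}$ and the horizontal derivative of $\mathbb{S}$ into the defining equation $\textbf{WPRic}_{0}=0$, clear the $b^{2}$- and $\beta$-denominators, and match coefficients of the resulting polynomial identity in $y$.

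I would first recall the spray coefficients of $F=\alpha^{2}/\beta$. Because $\phi(s)=1/s$ gives $\alpha Q=-\alpha^{2}/(2\beta)$, every factor of $\alpha$ appearing in the general $(\alpha,\beta)$-spray combines with another $\alpha$, and the Kropina geodesic coefficients are rational in $y$. From them I would derive an expression of the form $\textbf{Ric}=\overline{\textbf{Ric}}+\mathcal{A}(x,y)$, where $\mathcal{A}$ is a rational function of $\alpha^{2}$ and $\beta$ whose numerator is built from $r_{ij}$, $s_{ij}$, $t_{ij}=s_{i}{}^{m}s_{mj}$ and their first $\alpha$-covariant derivatives.

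Next, since $F_{0}=\alpha$ has volume density $\sigma_{0}=\sqrt{\det(a_{ij})}$, the ratio $\Sigma=\sigma_{0}/\sigma$ can be read off from the BH-volume of $F$, and the standard S-curvature formula for Kropina then yields
\[
\mathbb{S}=\frac{1}{(n+1)b^{2}}\bigl(s_{0}+\beta\sigma\bigr)+\frac{\theta}{n+1}.
\]
From this $\mathbb{S}^{2}$ is immediate, and $\mathbb{S}_{|k}y^{k}$ is computed using the Berwald connection of $F$, whose coefficients differ from those of $\alpha$ by non-Riemannian corrections coming from the Kropina spray. Both quantities end up rational in $\alpha^{2}$ and $\beta$.

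Plugging into $\textbf{WPRic}_{0}=\textbf{Ric}+(n-1)(\mathbb{S}^{2}+\mathbb{S}_{|k}y^{k})=0$ and multiplying through by $(n+1)^{2}b^{4}\beta^{2}$ yields a polynomial identity in $y$. Separating the $\alpha^{2}$-coefficient from the $\alpha^{2}$-free terms, and within the latter comparing $\beta^{2}$-coefficients (the standard manoeuvre for $(\alpha,\beta)$-metrics), produces the two stated equations: the $\alpha^{2}$-coefficient rearranges to the expression for $\overline{\textbf{Ric}}$ with $\lambda(x)$ absorbing the purely $x$-dependent contributions, while the $\alpha^{2}$-free piece forces $s^{m}s_{m}=-\tfrac{1}{2}b^{2}t^{m}{}_{m}$. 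The converse follows by running the computation backwards. I expect the main obstacle to lie in the computation of $\mathbb{S}_{|k}y^{k}$: the Kropina Berwald connection introduces many non-Riemannian correction terms that must be tracked carefully so that after multiplication by $\beta^{2}$ no odd powers of $\alpha$ remain and the split into the two stated equations is clean.
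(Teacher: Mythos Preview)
Your overall strategy---substitute the Kropina formulas, clear denominators, and split the resulting polynomial in $y$---is the same as the paper's, but two concrete points in your outline would not go through as written.

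First, your expression $\mathbb{S}=\frac{1}{(n+1)b^{2}}(s_{0}+\beta\sigma)+\frac{\theta}{n+1}$ is not the general S-curvature of a Kropina metric. The correct formula is
\[
\mathbb{S}=\frac{1}{Fb^{2}}\bigl(Fr_{0}-r_{00}\bigr)+\frac{\theta}{n+1},
\]
which contains a genuine $1/F$ term built from $r_{00}$, not from $s_{0}$. The function $\sigma$ in the statement is \emph{not given a priori}; it is the conformal factor satisfying $r_{00}=\sigma\alpha^{2}$, and this relation has to be \emph{derived} from $\textbf{WPRic}_{0}=0$ before $\sigma$ even enters the picture.

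Second, because $\textbf{WPRic}_{0}$ carries terms in $F^{2}$, $F$, $1$ and $1/F$, multiplying by $(n+1)^{2}b^{4}\beta^{2}$ does not clear the denominators; one needs $F\beta^{3}=\alpha^{2}\beta^{2}$. After doing so the identity is degree six in $\alpha$,
\[
\Gamma_{6}\alpha^{6}+\Gamma_{4}\alpha^{4}+\Gamma_{2}\alpha^{2}+\Gamma_{0}=0,
\]
not a simple $\alpha^{2}$/constant split. The paper's argument is therefore a two-pass divisibility analysis: the condition $\alpha^{2}\mid\Gamma_{0}$ forces $r_{00}=\sigma\alpha^{2}$ (equivalently $\textbf{S}=0$ by the Kropina characterisation), and only after substituting this back does the equation collapse to degree four. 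A second divisibility $\alpha^{2}\mid\Gamma'_{0}$ then yields the $\overline{\textbf{Ric}}$ equation with the auxiliary function $\lambda(x)$, and the remaining $\alpha^{4}$ and $\alpha^{2}$ coefficients give $s^{m}s_{m}=-\tfrac{1}{2}b^{2}t^{m}{}_{m}$ together with the explicit formula for $\lambda$. Your single-pass coefficient comparison misses this intermediate reduction, without which the symbol $\sigma$ in the final equations has no meaning.
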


\bigskip

In Theorems \ref{thm1} and \ref{thmK}, we find the necessary and sufficient conditions under which a Randers metric and Kropina metric are  weighted projective Ricci flat. It is interesting to find some condition under which a Finsler metric of isotropic weighted projective Ricci is a Rander or Kropina metric. Let $(M, F)$ be an $n$-dimensional Finsler manifold. Then $F$ is called  of isotropic $\textbf{WPRic}_0$-curvature if
\be
\textbf{WPRic}_0=(n-1)\sigma F^2,
\ee
where $\sigma=\sigma(x)$ is a scalar function on $M$. Here, we find a condition on S-curvature of projectively flat Finsler metric that make it to be a C-reducible metric. More precisely, we prove the following.
\begin{thm}\label{THMPS}
Let $F$ be a projectively flat Finsler metric. Suppose that $F$ has
isotropic weighted projective Ricci and isotropic S-curvature. Then $F$ is a Kropina  or Randers metric.
\end{thm}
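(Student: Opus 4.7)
The strategy is to exploit projective flatness to reduce the hypothesis to a single polynomial identity on $F^2$. Locally, projective flatness yields $G^i = Py^i$ with $P = F_{x^k}y^k/(2F)$ by the Hamel--Rapcs\'ak condition, and in dimension $n\ge 3$ it further forces scalar flag curvature, so that $\textbf{Ric} = (n-1)KF^2$ with $KF^2 = P^2 - P_{x^m}y^m$. A direct consequence of Hamel--Rapcs\'ak is the identity $F_{|k}y^k = 0$ for the Berwald connection of $F$, which will be used repeatedly.

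From $\textbf{S} = (n+1)P - (\ln\sigma_F)_{x^m}y^m$ (where $\sigma_F$ denotes the volume density of $F$) combined with isotropic $S$-curvature $\textbf{S} = (n+1)cF$, one obtains $P = cF + \phi$, with $\phi := (n+1)^{-1}(\ln\sigma_F)_{x^m}y^m$ a $1$-form in $y$; analogously $\mathbb{S} = cF - \phi + \mu$, where $\mu := (n+1)^{-1}(\ln\sigma_{F_0})_{x^m}y^m$. Substituting into
\[
\textbf{WPRic}_0 = \textbf{Ric} + (n-1)\bigl(\mathbb{S}^2 + \mathbb{S}_{|k}y^k\bigr) = (n-1)\sigma F^2,
\]
dividing by $n-1$, and evaluating $\mathbb{S}_{|k}y^k$ via $F_{|k}y^k = 0$ together with the elementary identity $\eta_{|k}y^k = \eta_{x^m}y^m - 2P\eta$ valid for any $1$-form $\eta = \eta_i(x)y^i$, systematic cancellation collapses everything to the clean relation
\[
(2\phi-\mu)^2 - (2\phi-\mu)_{x^m}y^m = \sigma\,F^2.
\]

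The left-hand side is a polynomial of degree at most $2$ in $y$. Hence if $\sigma \not\equiv 0$ on an open set, then $F^2$ is itself a quadratic form in $y$ there, so $F$ is Riemannian and is therefore a (degenerate) Randers metric with $\beta \equiv 0$. If $\sigma \equiv 0$, then $F$ is weighted projective Ricci flat and the identity reduces to $(2\phi-\mu)^2 = (2\phi-\mu)_0$, a compatibility condition between the volume densities of $F$ and $F_0$. In this remaining case, the surviving data --- projective flatness, isotropic $S$-curvature, and $\textbf{WPRic}_0 \equiv 0$ --- are to be combined with the classifications of Theorems~\ref{thm1} and~\ref{thmK} in order to identify $F$ as a Randers or Kropina metric.

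The principal obstacle lies entirely in the degenerate branch $\sigma \equiv 0$, where the quadratic identity no longer forces $F^2$ to be polynomial in $y$. The core task there is to argue, using the precise algebraic form of $P$ and $\mathbb{S}$ as linear combinations of $F$ with $1$-forms in $y$, that $F$ necessarily carries an $(\alpha,\beta)$-metric structure; once this is established, Theorems~\ref{thm1} and~\ref{thmK} apply directly to conclude that $F$ is either a Randers or a Kropina metric.
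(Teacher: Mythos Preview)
Your identity $(2\phi-\mu)^2 - (2\phi-\mu)_{x^m}y^m = \sigma F^2$ is correct, and your handling of $\sigma\not\equiv 0$ is rigorous. The paper, however, proceeds quite differently: it never substitutes $P = cF + \phi$ and never reaches your identity. It keeps $P$ as is and writes
\[
(\sigma - c^2)F^2 - (2c\eta - c_0)F + (P_0 - P^2 - \eta^2 - \eta_0) = 0,
\]
then splits on whether $\sigma = c^2$ or $\sigma \neq c^2$ (not on whether $\sigma = 0$). When $\sigma = c^2$ it solves the resulting linear equation in $F$ and calls the answer a Kropina metric; when $\sigma \neq c^2$ it applies the quadratic formula and calls the answer a Randers metric. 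The paper's argument is thus purely formal: the ``constant term'' $P_0 - P^2 - \eta^2 - \eta_0$ still contains $P$ (hence $F$), and $\eta_0 = \eta_{|s}y^s$ also involves $P$ through the Berwald connection, so the paper does not actually verify that the expressions obtained are of the form $\alpha^2/\beta$ or $\alpha+\beta$ with $\alpha$ Riemannian. In this sense your route is more honest than the paper's.

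That said, your branch $\sigma \equiv 0$ is a genuine gap, and your proposed fix is circular. Theorems~\ref{thm1} and~\ref{thmK} characterize weighted projective Ricci flatness \emph{within} the Randers and Kropina classes respectively; they presuppose that $F$ already has the relevant $(\alpha,\beta)$-form and therefore cannot be invoked to establish that form. When $\sigma \equiv 0$, your identity degenerates to $(2\phi-\mu)^2 = (2\phi-\mu)_{x^m}y^m$, a condition involving only the two volume densities, and places no constraint on $F$ itself. All that then survives of the hypotheses is projective flatness together with isotropic $S$-curvature, and neither of these --- nor their conjunction --- forces $F$ into the Randers or Kropina class (for instance, locally Minkowski metrics satisfy both with $c\equiv 0$ and are generically neither). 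The paper sidesteps this branch through its different and looser case split, whereas your sharper reduction lands squarely on a case it cannot close.
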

\section{Preliminaries}
Given a Finsler manifold $(M, F)$, then a global vector field ${\bf G}$ is associated to $F$ on $TM_0$, which in a standard coordinate $(x^i,y^i)$ for $TM_0$ is given by
\[
{\bf G}=y^i {{\partial} \over {\partial x^i}}-2G^i(x,y){{\partial} \over {\partial y^i}},
\]
where
\[
G^i:=\frac{1}{4}g^{il}\Big[\frac{\partial^2 F^2}{\partial x^k \partial y^l}y^k-\frac{\partial F^2}{\partial x^l}\Big],\ \ \ \  y\in T_xM.
\]
${\bf G}$ is called the  spray associated  to $(M,F)$.  In local coordinates, a curve $c(t)$ is a geodesic of $F$ if and only if its coordinates $(c^i(t))$ satisfy $ \ddot c^i+2G^i(\dot c)=0$. A diffeomorphism $f: (M,F)\rightarrow (M,\bar{F})$ between two Finsler manifolds is called a projective transformation, if $f$ maps every geodesic of $F$ to a geodesic of $\bar{F}$ as a point set.

\bigskip

It is known that a Finsler metric  $F=F(x,y)$ on a convex domain $\mathcal{U}  \subset \mathbb{R}^n$  is  projective  if and only if
its geodesic coefficients $G^i$ are in the form
\[
G^i(x, y) = P(x, y) y^i,
\]
where $P: T{\cal U} = {\cal U}\times \mathbb{R}^n \to \mathbb{R}$ is positively homogeneous with degree one, $P(x,  \lambda y) = \lambda P(x, y)$, $\lambda >0$. We call $P=P(x, y)$ the projective factor of $F(x, y)$.

\bigskip

The notion of Riemann curvature for Riemann metrics can be extended
to Finsler metrics. For a non-zero vector $y \in T_xM_{0}$,
the Riemann curvature $\textbf{R}_y: T_xM \rightarrow T_xM$ is defined by $\textbf{R}_y(u):=R^i_{\ k}(y)u^k {\partial \over {\partial x^i}}$, where
\[
R^i_{\ k}(y)=2{\partial G^i \over {\partial x^k}}-{\partial^2 G^i \over
{{\partial x^j}{\partial y^k}}}y^j+2G^j{\partial^2 G^i \over
{{\partial y^j}{\partial y^k}}}-{\partial G^i \over {\partial
y^j}}{\partial G^j \over {\partial y^k}}.
\]
The family $\textbf{R}:=\{\textbf{R}_y\}_{y\in TM_0}$ is called the Riemann curvature
\cite{Sh3}. We define the Ricci curvature as the trace of $\textbf{R}_y$, i.e., $\textbf{Ric}(x,y):=trace(\textbf{R}_y)$.

\bigskip
For a Finsler metric $F$ on an $n$-dimensional manifold $M$, the
Busemann-Hausdorff volume form $dV_F = \sigma_F(x) dx^1\wedge \cdots\wedge dx^n$ is defined by
\[
\sigma_F(x) := {{\rm Vol} (\Bbb B^n(1))
\over {\rm Vol} \Big \{ (y^i)\in \mathbb{R}^n \ \Big | \ F \Big ( y^i
\pxi|_x \Big ) < 1 \Big \} },
\]
where $\Bbb B^n(1)$ denotes the unit ball in $\mathbb{R}^n$.

\newpage

There is a notion of distortion $\tau =\tau(x,y) $ on $TM$   associated with the Busemann-Hausdorff volume form on $M$, i.e., $dV_{BH}=\sigma(x) dx^1\wedge  dx^2...\wedge dx^n$, which is defined by
\[
\tau  (x,y) = \ln \frac{ \sqrt{ \det (g_{ij}\big(x,y)\big) }}{ \sigma(x) }.
\]
Then the S-curvature is defined by
\[
{\bf S}(x,y) = \frac{d}{dt} \Big [ \tau \big (c(t), \dot{c}(t) \big ) \Big]_{{t=0}},
\]
where $ c=c(t)$ is the geodesic with $c(0)=x$ and $\dot{c}(0)=y$ \cite{Sh1}.
From the definition, we see that the S-curvature ${\bf S}(x,y)$ measures the rate of change of the distortion on $(T_{x}M, F_{x})$ in the direction $y\in T_{x}M$. In a local coordinates,  the S-curvature is given by
\[
{\bf S}= \frac{\pa G^m}{\pa y^m} - y^m \frac{\pa }{\pa x^m} \Big( \ln \sigma\Big).
\]
A Finsler metric $F$ is said to be of  isotropic S-curvature if
\[
{\bf S}=(n+1)cF,
\]
 where $c=c(x)$ is a scalar function on $M$.

\bigskip

The Ricci curvature $\textbf{Ric}=\textbf{Ric}(x, y)$ is the trace of the Riemann curvature defined by $\textbf{Ric}(x, y) := R^m_{\ m}(x, y)$.  A metric $F$ on an $n$-dimensional manifold $M$ is called a weakly Einstein metric if
\be
\textbf{Ric}=(n-1)\Big(\kappa+\frac{3\theta}{F}  \Big) F^2,\label{WE}
\ee
where $\kappa=\kappa(x)$ is a scalar function and $\theta=\theta_i(x)y^i$ is a 1-form on $M$. If $\theta=0$, then $F$ is called an Einstein metric.

Let $(M, F)$ be an $n$-dimensional Finsler manifold. Then the projective Ricci curvature of $F$, denoted it by $\mathbf{{ PRic}}=\mathbf{{ PRic}}(x, y)$ is  defined by following
\begin{align}
\mathbf{{ PRic}}=\mathbf{{Ric}}+\frac{n-1}{n+1}\mathbf{{S}}_{| i}y^i+\frac{n-1}{(n+1)^2}\mathbf{{S}}^2,\label{PRicx}
\end{align}
where ``$|$ '' denotes the horizontal covariant derivative with respect to the Berwarld connection of $F$ (see \cite{CSM}).

\begin{ex}
\emph{Let $\alpha_1=\sqrt{a_{ij}(x)y^iy^j}$ and $\alpha_2=\sqrt{\bar{a}_{ij}(x)y^iy^j}$ be two Ricci-flat Riemannian metrics on the manifolds $M_1$  and $M_2$ of dimension $n_1$ and $n_2$, respectively. Consider the following 4-th root metric
\[
F:=\sqrt[4]{\alpha_1^4 + 2 c \alpha_1^2\alpha_2^2 + \alpha_2^4}.
\]
This is a Ricci-flat $({\mathbf{Ric}}=0)$ and  Berwald metric on the manifold $M:=M_1\times M_2$ of dimension $n:=n_1+n_2$. Thus $F$ is a non-Riemannian Finsler metric satisfies
\[
\textbf{WPRic}_0=(n-1)\{\beta^2+\beta_{|k}y^k\},
\]
where $\beta:=\frac{1}{n+1}d \ln(\Sigma)$ while ${\bf PRic} =0$.}
\end{ex}

\begin{ex}
\emph{Every Ricci-flat Kropina metric is a Berwald metric \cite{ZS}. Berwald metrics have vanishing S-curvature \cite{TR}. Thus
\[
\textbf{WPRic}_0=(n-1)(\theta^2+\theta_{|k}y^k),
\]
where $\theta:=\frac{1}{n+1}\big[d \ln(\Sigma)\big]$. This metric satisfies  ${\bf PRic} =0$.}
\end{ex}

\begin{ex}
\emph{Denote generic tangent vectors on $\mathbb{S}^3$ as
\[
u \frac{\partial}{\partial x}+v \frac{\partial}{\partial y}+  w \frac{\partial}{\partial z} \ .
\]
The Finsler function for Bao-Shen's Randers space is given by
$$F(x,y,z;u,v,w) = \alpha(x,y,z;u,v,w) + \beta(x,y,z;u,v,w) , $$
with
\[
\alpha=  \frac{ \, \sqrt{ \varrho ( c u - z v + y w )^2 + ( z u + c v - x w )^2+ (-y u + x v + c w )^2 } \, }{ 1 + x^2 + y^2 + z^2 } \ ,
\]
\[
\beta=  \frac{ \, \pm \sqrt{ \, \varrho-1 \, } \ ( c u - z v + y w ) \, } { \, 1 + x^2 + y^2 + z^2 \, } ,
\]
where $\varrho>1$ is a real constant \cite{BS}. The family of  Randers metrics on $\mathbb{S}^3$ constructed by Bao-Shen satisfies ${\bf S}=0$. Since these metrics are of constant flag curvature ${\bf K}$, then ${\bf Ric}=2{\bf K}F^2$. Thus Bao-Shen's metrics have constant projective Ricci curvature with $\kappa={\bf K}=constant$. Also, we get
\[
\textbf{WPRic}_0:=2{\bf K}F^2 + 2\{\gamma^2+\gamma_{|k}y^k\},
\]
where $\gamma:={1}/{4}\big[d \ln(\Sigma)\big]$. This metric satisfies  ${\bf PRic} =2{\bf K}F^2$.}
\end{ex}

\begin{ex}
\emph{For a real number $a\in \mathbb{R}^n$, let  us define the Randers metric $F:=\alpha+\beta$ by
\begin{eqnarray*}
\alpha\!\!\!\!\!&:=&\!\!\!\!\! {\sqrt{(1-|a|^2|x|^4)|y|^2+(|x|^2<a,y>-2<a,x><x,y>)^2}\over 1 -|a|^2|x|^4}\\
\beta \!\!\!\!\!&:=&\!\!\!\!\! {|x|^2<a,y>-2<a,x><x,y>\over 1 -|a|^2|x|^4}.
\end{eqnarray*}
Let us put $c:=<a, x>$, $c_0:=c_{x^m}y^m$ and $\rho:=3<a,x>^2-2|a|^2|x|^2$. Then the above  Randers metric satisfies
${\bf S}=(n+1)cF$ and ${\bf Ric}=(n-1)(3c_0F+\rho F^2)$.  See \cite{CS}.  Let us put
\[
\beta:=\frac{1}{n+1}d \ln(\Sigma) \ \ \  \textrm{and }\ \ \ \  \beta_0:=\beta_{|k}y^k.
\]
Then we get
\[
\textbf{WPRic}_0=(n-1)\Big[(\rho +c^2)F^2+(4c_0+2c\beta)F+(\beta_0+\beta^2)\Big].
\]
 This metric satisfies  ${\bf PRic} =(n-1)[4c_0+(\rho+c^2) F]F$.}\end{ex}

\section{Proof of Theorem \ref{thm2}}\label{SEC3}
Let us express the projective factor of two projectively equivalent Finsler metrics in terms of their geometric quantities. Suppose that $F_1$ and $F_2$ are two projectively equivalent Finsler metrics on an $n$-dimensional manifold $M$, with volume forms $dV_{F_1}=\sigma_1(x) dx^1\wedge \cdots\wedge dx^n$ and $dV_{F_2}=\sigma_2(x) dx^1\wedge\cdots\wedge dx^n$, respectively. Suppose that $\textbf{S}_1$ and $\textbf{S}_2$ denote the S-curvatures of $F_1$ and $F_2$, respectively.  Then the projective factor is given by
\[
P=\frac{2}{n+1}\left[(\textbf{S}_1-\textbf{S}_2)-d\ln \left(\frac{\sigma_1}{\sigma_0}\right)+d\ln \left(\frac{\sigma_2}{\sigma_0}\right)\right].
\]
Let $\textbf{G}_1$ and $\textbf{G}_2$ denote the sprays  of $F_1$ and $F_2$, respectively. By assumption,  $\textbf{G}_2=\textbf{G}_1+PY$, where $\textbf{Y}=y^i{\pa}/{\pa y^i}$ is the Liouville vector field. Then by direct calculation we have
\[
\textbf{G}_2+\frac{2}{n+1}\left[\textbf{S}_2-d\ln \left(\frac{\sigma_2}{\sigma_0}\right)\right]\textbf{Y}=\textbf{G}_1+\frac{2}{n+1}\left[\textbf{S}_1-d\ln \left(\frac{\sigma_1}{\sigma_0}\right)\right]\textbf{Y}.
\]
This means that the following spray
\[
\tilde{\textbf{G}}:= \textbf{G}+\frac{2}{n+1}\left[\textbf{S}-d\ln \left(\frac{\sigma}{\sigma_0}\right)\right]\textbf{Y}
\]
is projectively invariant, where ${\bf S}={\bf S}(x, y)$ is the S-curvature of $F$. It is easy to see that $\tilde{\textbf{G}}$ is a globally defined spray on $M$. Let us define
\begin{align}
\widetilde{W}^0_i:=-\frac{1}{2}\Big\{2\widetilde{\textbf{R}}_{x^i}-y^j\widetilde{\textbf{R}}_{x^jy^i}+2\widetilde{G}^k\widetilde{\textbf{R}}_{y^iy^k}
-\widetilde{N}^k_i\widetilde{\textbf{R}}_{y^k}\Big\},
\end{align}
where $\widetilde{\textbf{R}}:=\widetilde{\textbf{Ric}}$ and $\widetilde{N}^i_j:=\partial\widetilde{G}^i/{\partial y^j}$ is the connection coefficients of $\widetilde{\textbf{G}}$. Thus
\[
\widetilde{\textbf{R}}=\textbf{Ric}+\textbf{S}_{|m}y^m+\textbf{S}^2.
\]
Now, we construct a new spray on $M$ as follows
\begin{align}
\widehat{\textbf{G}}&:=\widetilde{\textbf{G}}+\frac{2}{n+1}\beta \textbf{Y},
\end{align}
where $\beta:=d \ln({\sigma}/{\sigma_0})=b_i(x) y^i$ is a closed 1-form on $M$. In this case, we get
\begin{align}
\widehat{\textbf{R}}&:=\widetilde{\textbf{R}}+\frac{1}{3}\beta_{|m}y^m+\frac{1}{9}\Big\{\beta^2+2\beta \textbf{S}\Big\}.
\end{align}
Put
\[
\widehat{H}:=\frac{1}{3}\beta_{|m}y^m+\frac{1}{9}\Big\{\beta^2+2\beta \textbf{S}\Big\}.
\]
Then we have
\begin{align*}
\widehat{W}_i&=\frac{1}{2}\Big\{2\widehat{\textbf{R}}_{x^i}-y^j\widehat{\textbf{R}}_{x^jy^i}+2\widehat{G}^k\widehat{\textbf{R}}_{y^jy^k}-\widehat{N}^k_i\widehat{\textbf{R}}_{y^k}\Big\}\\
&=\widetilde{W}_i-\frac{1}{2}\Big\{2\widehat{H}_{x^i} -y^j\widehat{H}_{x^jy^i}+\frac{2}{3}\beta \widetilde{\textbf{R}}_{y^i}+\widehat{G}^k\widehat{H}_{y^iy^k} -\frac{2}{3}\big(2b_i\widetilde{\textbf{R}}+\beta \widehat{\textbf{R}}_i\big)-\widehat{N}^k_i\widehat{H}_k\Big\},\\
\widehat{N}^k_i&=\widetilde{N}^k_i+\frac{2}{3}\left(b_iy^k+\beta \delta^k_i\right),\\
\widehat{\textbf{R}}_k&=\widetilde{\textbf{R}}_k+\widehat{H}_k.
\end{align*}
\begin{prop}
For 2-dimensional Finsler metric weighted projective Ricci curvature  is dependent on its volume form and $W^0_i$ is independent of volume form.
\end{prop}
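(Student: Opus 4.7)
The proposition makes two independent assertions about a 2-dimensional Finsler manifold: (a) that $\textbf{WPRic}_0$ genuinely depends on the chosen volume form, and (b) that the quantity $W^0_i$ is volume-form invariant. My plan is to address them separately, treating the volume form of $F$ and the reference volume form $\sigma_0$ as two distinct potential sources of dependence.

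For (a), I would start from the definition \eqref{PRic} specialised to $n=2$, which reads $\textbf{WPRic}_0 = \textbf{Ric} + \mathbb{S}^2 + \mathbb{S}_{|k}y^k$ with $\mathbb{S} = \tfrac{1}{3}\bigl[\textbf{S} + d\ln\Sigma\bigr]$ and $\Sigma = \sigma_0/\sigma$. Since the Ricci curvature of $F$ is built from the spray alone and is volume-form independent, all of the volume-form dependence is isolated in $\mathbb{S}$. Under a rescaling $\sigma \mapsto \lambda(x)\sigma$, the local-coordinate formula $\textbf{S} = \partial_{y^m} G^m - y^m \partial_{x^m}\ln\sigma$ gives $\textbf{S} \mapsto \textbf{S} - d\ln\lambda$, while $d\ln\Sigma \mapsto d\ln\Sigma - d\ln\lambda$, so $\mathbb{S}$ shifts by $-\tfrac{2}{3}d\ln\lambda$. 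Plugging this shift into $\mathbb{S}^2 + \mathbb{S}_{|k}y^k$ produces non-trivial additive corrections that do not cancel for non-constant $\lambda$, which exhibits the volume-form dependence of $\textbf{WPRic}_0$.

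For (b), the approach is to recognise $W^0_i$ as the projective Weyl quantity of $F$, i.e.\ the trace-free projective modification of $R^i_{\ k}$ defined purely from the spray $G^i$ of $F$. Because the Riemann curvature $R^i_{\ k}$ appearing in that definition involves only $G^i$ and its $y$- and $x$-derivatives, with no appearance of the volume coefficient $\sigma$ or the reference $\sigma_0$, the same is true of $W^0_i$; volume-form invariance is therefore automatic from the definition. In dimension two the Riemann curvature collapses to the flag scalar, so the identification can be checked directly by exhibiting the explicit 2D form of $W^0_i$ and confirming that no volume-form factor appears.

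The main obstacle is precisely pinning down the intended meaning of $W^0_i$ in the authors' conventions, distinguishing it from the modified quantity $\widetilde{W}^0_i$ built above from the projectively invariant spray $\widetilde{\textbf{G}}$ (which, as one checks from its defining formula, does depend on both $\sigma$ and $\sigma_0$). Once $W^0_i$ is identified as the classical Finslerian projective Weyl tensor of $F$, both assertions reduce to routine verifications using the transformation rules of $\textbf{S}$ and $d\ln(\sigma_0/\sigma)$ under rescaling of the volume forms.
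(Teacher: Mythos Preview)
Your treatment of part (a) is reasonable in spirit, though the paper proceeds differently: it works in explicit 2-dimensional local coordinates $(x,y,u,v)$, replaces $\mathbf{G}$ by a convenient projectively related local spray, builds $\widehat{\mathbf{G}}$ from it, and then computes the Ricci curvature $\widehat{R}$ of $\widehat{\mathbf{G}}$ directly. The resulting formula contains explicit terms in derivatives of $\sigma_0$ that do not cancel, which exhibits the dependence on the reference volume form (rather than on $\sigma$, which is what you vary). Your variational argument could be made to work, but note that the dependence the paper establishes is on $\sigma_0$, not on $\sigma$.

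Part (b) has a genuine gap, rooted in a misidentification. The $W^0_i$ in the proposition is \emph{not} the classical projective Weyl tensor of $F$; it is precisely the quantity $\widehat{W}^0_i$ constructed from the modified spray $\widehat{\mathbf{G}}$ (equivalently the $\widetilde{W}^0_i$ defined earlier in Section~\ref{SEC3}). Your parenthetical remark---that this modified object ``does depend on both $\sigma$ and $\sigma_0$'' as one checks from its defining formula---is exactly what the proposition refutes. Yes, naive inspection of the defining expression shows $\sigma_0$ entering through $\widehat{G}$, $\widehat{H}$, $\widehat{N}^k_i$ and $\widehat{R}$; the whole content of (b) is that when one actually expands $\widehat{W}^0_2$, all of those contributions cancel. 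The paper carries out this expansion explicitly and obtains a lengthy expression purely in the function $\phi$ and its partial derivatives, with no $\sigma_0$-terms surviving. That cancellation is not automatic and cannot be obtained by reinterpreting $W^0_i$ as a spray-only invariant. A further signal that your reading is off: in dimension two the classical projective Weyl tensor vanishes identically, so under your interpretation (b) would be vacuous and not a statement worth recording.
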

\begin{proof}
Let $(M, F)$ be a $2$-dimensional Finsler manifold. Suppose that ${\bf G}={\bf G}(x, y)$ denotes the spray of $F$, and $d\mu=\sigma dx dy$ be its volume form. In a standard local coordinate system $(x, y, u, v)$, one can express ${\bf G}$ as follows
\[
{\bf G}=u\frac{\partial}{\partial x}+v\frac{\partial}{\partial y}-2G(x, y, u, v)\frac{\partial}{\partial u}-2H(x, y, u, v)\frac{\partial}{\partial v}.
\]
We construct a local spray
\[
\widetilde{\textbf{G}}=u\frac{\partial}{\partial x}+v\frac{\partial}{\partial y}-2\tilde{G}(x,y,u,v)\frac{\partial}{\partial u}-2\tilde{H}(x,y,u,v)\frac{\partial}{\partial v},
\]
where
\[
\widetilde{G}:=0, \qquad \widetilde{H}:=-\frac{1}{2}u^2\phi\Big(x,y,\frac{v}{u}\Big).
\]
Since $\widetilde{\textbf{G}}$ is pointwise projective to $\bf{G}$ and $\textbf{W}^0$ is a projective invariant, we just need to calculate the $\textbf{W}^0$ for $\widetilde{\textbf{G}}$. Let $d\mu_0= \sigma_0(x,y)dx dy$ be a fix volume form for $\widetilde{\textbf{G}}$. Then,  we have
\[
\widetilde{\textbf{S}}=-\frac{1}{2}u \phi_\xi -\frac{\sigma_x}{\sigma}u-\frac{\sigma_y}{\sigma}v.
\]
Thus
\begin{align*}
\widehat{\textbf{G}}&=\widetilde{\textbf{G}}+\frac{2}{3}\left[\widehat{\textbf{S}}+d\ln\left(\frac{\sigma}{\sigma_0}\right)\right]\textbf{Y}\\
& =u\frac{\partial}{\partial x}+v\frac{\partial}{\partial y}-2\widehat{G}(x,y,u,v)\frac{\partial}{\partial u}-2\widehat{H}(x,y,u,v)\frac{\partial}{\partial v}
\end{align*}
where
\[
\widehat{G}=\left\{\frac{1}{6}u\phi_\xi+\frac{1}{3}\left(\frac{\sigma_x u+\sigma_y v}{\sigma_0}\right)\right\}u,\qquad\ \ \widehat{H}=-\frac{1}{2}u^2\phi +\left\{\frac{1}{6}u\phi_\xi+\frac{1}{3}\left(\frac{\sigma_x u+\sigma_y v}{\sigma_0}\right)\right\}v,
\]
where $\xi={v}/{u}$. The weighted Ricci of $\widehat{\textbf{G}}$ is given by
\begin{equation}\label{RRR}
\widehat{R}=2(\widehat{G}_x+\widehat{H}_y)-P_xu-P_yv+2P_u\widehat{G} +2P_v\widehat{H}-(\widehat{G}_u\widehat{G}_u+2\widehat{G}_v\widehat{H}_u+\widehat{H}_v\widehat{H}_v)
\end{equation}
where $P:=\hat{G}_u+\hat{H}_v$. By direct calculation, we get
\begin{align}
\nonumber u^{-2}\widehat{R}=&-\phi_y+\frac{1}{3}\phi_{x\xi}+\frac{1}{3}\xi \phi_{y\xi}+\frac{1}{3}\phi \phi_{\xi \xi}-\frac{2}{9} \phi_\xi \phi_\xi -\frac{1}{3}\frac{\sigma_{0_y}}{\sigma_0}\phi\\
&+\frac{1}{9}\left[\frac{\sigma_{0_x}+\sigma_{0_y}\xi}{\sigma_0}\right]\phi_\xi-\frac{\sigma_{0_{xx}}+2\sigma_{0_{xy}}\xi+\sigma_{0_{yy}}\xi^2}{2\sigma_0}
+\frac{4}{9}\left[\frac{\sigma_{0_x}+\sigma_{0_y}\xi}{\sigma_0}\right]^2.\label{RRR1}
\end{align}
It follows that $\widehat{R}$ depend on the volume form $d\mu_0$.

Now, we are going to compute $\widehat{W}^0$. Since $W^0_y(y)=0$, then we have $\widehat{W}^0_1=-\widehat{W}^0_2\xi$. Therefore we only compute $\widehat{W}^0_2$.  The following holds
\begin{equation}\label{WWW}
\widehat{W}^0_2=-\frac{1}{2}\left\{2\widehat{R}_y-u\widehat{R}_{xv}-v\widehat{R}_{yv} +2\widehat{G}\widehat{R}_{uv}+2\widehat{H}\widehat{R}_{vv} -\widehat{G}_v\widehat{R}_u -\widehat{H}_v\widehat{R}_v\right\}.
\end{equation}
Using (\ref{WWW})  we have
\begin{align*}
u^{-2}\widehat{W}^0_2=& \frac{1}{6}\phi_{xx\xi\xi}+\frac{1}{3}\xi\phi_{xy\xi\xi}+\frac{1}{6}\phi_x\phi_{\xi\xi\xi}+\frac{1}{3}\phi \phi_{x \xi\xi\xi}+\frac{1}{6}\xi^2\phi_{yy\xi\xi}+\frac{1}{6}\xi \phi_y\phi_{\xi\xi\xi}\\
&+\frac{1}{3}\xi \phi \phi_{y\xi\xi\xi}+\frac{1}{6}\phi^2\phi_{\xi\xi\xi\xi} -\frac{1}{2}\phi \phi_{y\xi\xi}-\frac{1}{6} \phi_\xi \phi_{x\xi\xi}-\frac{1}{6}\xi\phi_x\phi_{y\xi\xi}\\
&+\frac{2}{3}\phi_\xi\phi_{y\xi}-\frac{2}{3}\xi \phi_{yy\xi}-\frac{2}{3}\phi_{xy\xi}-\frac{1}{2}\phi_y\phi_{\xi\xi}+\phi_{yy}
\end{align*}
which shows that $\widehat{W}^0_i$ is independent of the choice of volume form.
\end{proof}


\bigskip

\noindent
{\bf Proof of Theorem \ref{thm2}:} Let $F$ be a Finsler metric and $\textbf{G}$ be its spray. It is easy to see that
\[
\tilde{\textbf{G}}=\textbf{G}+\frac{2}{n+1}\bigg[\textbf{S}+d \ln(\Sigma)\bigg]\textbf{Y}
\]
 is projectively related to $\textbf{G}$, where $\textbf{Y}=y^i{\pa}/{\pa y^i}$ is the Liouville vector field  and $\Sigma:={\sigma}/{\sigma_0}$. The Ricci curvature of $\tilde{\textbf{G}}$ is called the weighted projective Ricci curvature with respect to $F_0$ (actually with respect to the volume form $dV_0$) and is given by
\begin{equation}\label{PRic}
\textbf{WPRic}_0:=\textbf{Ric} + (n-1)\{\mathbb{S}^2+\mathbb{S}_{|k}y^k\},
\end{equation}
where
\[
\mathbb{S}:=\frac{1}{n+1}\big[\textbf{S}+d \ln(\Sigma)\big],
\]
Now, suppose that $\textbf{WPRic}_0 -\textbf{Ric}\ge 0$. Then by (\ref{PRic}) we obtain
\begin{equation}\label{S1}
\frac{\Big(\textbf{S}+d \ln(\Sigma)\Big)^2}{(n+1)^2}+\frac{\Big(\textbf{S}+d \ln(\Sigma)\Big)_{|k}y^k}{(n+1)}\ge 0
\end{equation}
or
\begin{equation}\label{S1}
\frac{\Big(\textbf{S}+d \ln(\Sigma)\Big)^2}{(n+1)^2}+\frac{\Big(\textbf{S}+d \ln(\Sigma)\Big)_{|k}y^k}{(n+1)}\leq 0.
\end{equation}
Let $c=c(t)$ be the geodesic of $F$ and $\dot{c}(0)=y$ where $y$ is a fixed vector in $T_xM_0$. By assumption, $F$ is complete, hence $c=c(t)$ is defined for all $t\in \mathbb{R} $. We can rewrite (\ref{S1}) as follows
\[
\varphi(t)^2+\varphi'(t)\ge 0,
\]
where
\[
\varphi(t)=\frac{\Big(\textbf{S}+d \ln(\Sigma)\Big)\big(c(t),\dot{c}(t)\big)}{n+1}.
\]
Let
\[
\psi(t):=\frac{\varphi(0)}{1+t\varphi(0)}.
\]
Then it is easy to see that
\[
g^2(t)+g'(t)=0\ \ \ \ \textrm{and} \ \ \  \  g(0)=\varphi(0).
\]
To compare $\varphi(t)$ and $\psi(t)$, we  define
\[
\Phi(t):=\exp\Bigg\{\int_0^t (\varphi(s)+\psi(s))ds\Bigg\}\Big(\varphi(t)-\psi(t)\Big).
\]
We have $\Phi(0)=0$ and
\[
\Phi'(t)=\exp\Bigg\{\int_0^t \big(\varphi(s)+\psi(s)\big)ds\Bigg\} \Big(\varphi(t)^2+\varphi'(t)\Big)\ge 0.
\]
Thus we have $\Phi(t)\ge 0$ for $t\ge 0$ and $\Phi(t)\le 0$ for $t\le0$, which means
\begin{align*}
&\varphi(t)\ge \psi(t),\quad t\ge 0\\
&\varphi(t)\leq \psi(t),\quad t\le 0.
\end{align*}
Suppose that $\varphi(0)\neq 0$ and put $t_0={1}/{\varphi(0)}$. If $\varphi(0)>0$, then $t_0>0$ and we have $\varphi(t_0)\ge \lim_{t\rightarrow t^-_0} g(t)=\infty$ and if $\varphi(0)<0$, then $t_0<0$ and we have $\varphi(t_0)\le \lim_{t\rightarrow t^+_0} \psi(t)=-\infty$ which are impossible. Thus,  we have $\varphi(0)=0$ and consequently $\textbf{S}=-d \ln(\Sigma)$. This completes the proof.
\qed

\bigskip

\begin{cor}
Let $(M,F)$ be a complete Finsler manifold. Suppose $S$-curvature is  reversible, i.e., ${\bf S}(x,-y)={\bf S}(x,y)$, $\forall y\in TM_0$ and  $F_0$ be a fixed Finsler metric on $M$. If the  weighted projective Ricci flat with respect to $F_0$ satisfies $\textbf{WPRic}_0 \ge \textbf{Ric} $, then \ \ ${\bf S}=0$.
\end{cor}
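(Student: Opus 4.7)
The plan is to deduce this corollary directly from Theorem \ref{thm2} and then upgrade the conclusion $\mathbb{S}=0$ to $\textbf{S}=0$ using the reversibility hypothesis on the $S$-curvature.

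First, since $(M,F)$ is complete and we are given that $\textbf{WPRic}_0\geq \textbf{Ric}$, we are in the first alternative of Theorem \ref{thm2}. Applying that theorem yields $\mathbb{S}=0$. Unpacking the definition (\ref{PRicajib}) of $\mathbb{S}$, this says
\[
\textbf{S}(x,y) = -d\ln(\Sigma)(x,y) = -\frac{\partial \ln \Sigma}{\partial x^i}(x)\, y^i.
\]
Hence $\textbf{S}$ is realized as a pointwise 1-form in $y$, and in particular is linear (hence odd) in the $y$ variable:
\[
\textbf{S}(x,-y) = -\textbf{S}(x,y).
\]

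Second, combine the displayed identity with the standing reversibility hypothesis $\textbf{S}(x,-y)=\textbf{S}(x,y)$. Adding the two relations gives $2\textbf{S}(x,y)=0$, so $\textbf{S}\equiv 0$ on $TM_0$, which is the desired conclusion.

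There is no real obstacle: the core content is already packaged inside Theorem \ref{thm2}, and the only extra observation is the parity argument. The one point worth double-checking in a careful write-up is that Theorem \ref{thm2} is genuinely applicable, i.e., that the one-sided inequality $\textbf{WPRic}_0\geq \textbf{Ric}$ falls within the disjunction ``$\geq$ or $\leq$'' appearing in its statement; this is immediate from the statement of the theorem itself.
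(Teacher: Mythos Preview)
Your proposal is correct and follows essentially the same route as the paper: apply Theorem \ref{thm2} to obtain $\textbf{S}=-d\ln(\Sigma)$, observe this makes $\textbf{S}$ a 1-form (hence odd in $y$), and combine with the reversibility hypothesis to force $\textbf{S}=0$. The paper phrases the last step as ``the only reversible 1-form on $M$ is the zero 1-form,'' which is exactly your parity argument.
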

\begin{proof}
By Theorem \ref{thm2}, we have $\textbf{S}=-d \ln(\Sigma)$ which means $S$-curvature is a 1-form on $M$. On the other hand, the only reversible 1-form on $M$ is the zero 1-form.  Thus $\textbf{S}=0$.
\end{proof}

\section{Proof of the Theorem \ref{thm1}}
A Randers metric on a manifold $M$ is a Finsler metric in the following form
\begin{equation} \label{randers}
F=\alpha+\beta,
\end{equation}
where $\alpha=\sqrt{a_{ij}(x)y^iy^j}$ is a Riemannian metric and $\beta=b_i(x)y^i$ a 1-form on $M$. Randers metrics have many applications both in mathematics and physics \cite{Ra}.
 It  is interesting to  characterize weighted projective Ricci flat Randers metrics with respect to their Riemannian parts. Suppose $F=\alpha +\beta$ is a Randers  metric on a manifold $M$. We consider $F_0=\alpha$. In this case, we have
\[
\ln(\Sigma)=(n+1)\ln \rho,
\]
where $\rho:=\sqrt{1-b^2}$ and $b$ is the norm of $\beta$ with respect to $\alpha$, that is, $b^2=a^{ij}b_ib_j$, in which $(a^{ij})$ is the inverse  of the positive-definite matrix $(a_{ij})$ \cite{Sh1}. In \cite{CSM}, Cheng characterized Randers metrics with ${\bf PRic}=0$. A Finsler metric $F$ on a manifold $M$ is said to be generalized Berwald metric if there exists a linear connection on $M$ such that its parallel transformations preserve Minkowski norms on tangent spaces of $M$, which are induced from the Finsler metric $F$. For a Randers metric $F=\alpha+\beta$, it is known that $b$ is a constant function if and only if $F$ is a generalized Berwald metric \cite{TB}, which implies that $\textbf{WPRic}_0=\textbf{PRic}$. Therefore, in the sequel,  we suppose
$F=\alpha +\beta$ is not a generalized Berwald metric.

In this section, we are going to prove the Theorem \ref{thm1}. For this aim, for a Randers metric $F=\alpha+\beta$, let us put
\begin{eqnarray*}
r_{ij}:= {1\over 2}  ( b_{i;j}+b_{j;i} ),\!\!\!\!&&\!\!\!\!\  s_{ij} := {1\over 2} ( b_{i;j} - b_{j;i}),\\
r_j := b^i r_{ij}, \   s_j:=b^i s_{ij},\   r_{i0}: = r_{ij}y^j, \!\!\!\!&&\!\!\!\!\ \  s_{i0}:= s_{ij}y^j,\  r_0:= r_j y^j,\  s_0 := s_j y^j,\\
t_{ij}:=s^k_{\ i} s_{kj}, \ \  t_{i0}:=t_{ij}y^j,\, \ t_{00}:=t_{ij}y^iy^j, \!\!\!\!&&\!\!\!\!\ \ q_{ij}:=r^k_{\ i}s_{kj},\, \ q_{i0}=q_{ij}y^j,\, q_{00}=q_{ij}y^iy^j,
\end{eqnarray*}
where ``$; $ '' stands for horizontal covariant derivative with respect to $\alpha$ and
\[
s^k_{\ i}=a^{kt}s_{ti},\,\,\, r^k_{\ i}=a^{kt}r_{ti}.
\]
Also, let us recall some important facts about Randers metrics \cite{Sh1}. The geodesic coefficients of a Randers metric $F=\alpha+\beta$ are given by
\begin{equation}
G^i= \bar{G}^i +\alpha s^i_{\ 0}+\frac{1}{2F}\Big\{r_{00}-2\alpha s_0\Big\}y^i.
\end{equation}
The Ricci curvature of $F=\alpha+\beta$ is given by
\begin{align}
\nonumber \textbf{Ric} &=\overline{{\bf Ric}}+(2\alpha s^m_{~0;m}-2t_{00}-\alpha^2 t^m_{~m})\\
&+(n-1)\bigg\{\frac{3}{4F^2}(r_{00}-2\alpha s_0)^2+\frac{1}{2F}\Big[4\alpha(q_{00}-\alpha t_0)-(r_{00;0}-2\alpha s_{0;0})\Big]\bigg\},\label{Ric}
\end{align}
where $\overline{{\bf Ric}}$ denotes the Ricci curvature of $\alpha$. The $S$-curvature of $F=\alpha+\beta$ is given by the following:
\begin{equation}\label{Sa}
\textbf{S}=(n+1)\Big[\frac{e_{00}}{2F}-(s_0+\rho_0)\Big],
\end{equation}
where $e_{ij}=r_{ij}+s_ib_j+s_jb_i$ and $e_{00}=e_{ij}y^iy^j$.

For a Randers metric, we get
\begin{equation}
\mathbb{S}=\frac{e_{00}}{2F}-s_0-\rho_0+\rho_0=\frac{r_{00}-2\alpha s_0}{2F}.
\end{equation}
By direct calculation we have
\begin{equation}\label{SH}
\mathbb{S}_{|m}y^m=\mathbb{S}_{;m}y^m-2\alpha s^m_{~0}\mathbb{S}_{y^m}-2(\frac{e_{00}}{2F}-s_0)\mathbb{S}.
\end{equation}
(\ref{Sa}) implies that
\begin{align*}
&\mathbb{S}_{;m}y^m=\frac{1}{2F}\big\{r_{00;0}-2\alpha s_{0;0}\big\}-\frac{r_{00}}{2F^2}\big(r_{00}-2\alpha s_0\big),\\
&2\alpha s^m_{~0}\mathbb{S}_{y^m}=\frac{2\alpha}{F}(q_{00}-\alpha t_0)-\frac{\alpha s_0}{F^2}(r_{00}-2\alpha s_0),\\
&2(\frac{e_{00}}{2F}-s_0)\mathbb{S} =\frac{(r_{00}-2\alpha s_0)^2}{2F^2}.
\end{align*}
Substituting the above equations in (\ref{PRic}), we get  the weighted projective Ricci flat of Randers metric $F=\alpha+\beta$ with respect to $\alpha$ as follows
\begin{equation}
\textbf{WPRic}_0=\overline{{\bf Ric}}+2\alpha s^m_{~0;m}-2t_{00}-\alpha^2 t^m_{~m}.\label{PR1}
\end{equation}
Suppose that $\textbf{WPRic}_0=0$. Then, by (\ref{PR1}) we obtain
\begin{equation}\label{a3}
\alpha Irrat  +Rat=0,
\end{equation}
 where
\begin{align*}
Irrat&:=2 s^m_{~0;m},\\
Rat&:=A_2 \alpha^2 +A_0,
\end{align*}
and
\begin{align*}
A_2&=- t^m_{~m},\\
A_0&=\overline{{\bf Ric}} -2 t_{00}.
\end{align*}
The equation (\ref{a3}) is equivalent to $Irrat=0$ and $Rat=0$. From the former,  we  have
\begin{equation}\label{cc}
\overline{{\bf Ric}}=t^m_{~m}\alpha^2+2t_{00}.
\end{equation}
From $Irrat =0$, we obtain $s^m_{~0;m}=0$, which completes the proof.
\qed

\bigskip

\begin{cor}
Let $F=\alpha+\beta$ be a Douglas Randers metric. Then $F$ is of weighted projective Ricci flat if and only if\ \ $\overline{\bf Ric}=0$.
\end{cor}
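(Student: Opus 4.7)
The plan is to apply Theorem \ref{thm1} together with the standard Bacso--Matsumoto characterization of Douglas Randers metrics: a Randers metric $F = \alpha + \beta$ is a Douglas metric if and only if the 1-form $\beta$ is closed, equivalently $s_{ij} = 0$. This classical fact is the only external input; everything else is tensor bookkeeping from the definitions introduced just before equation (\ref{PR1}).

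First I would assume that $F$ is Douglas and trace through the consequences of $s_{ij} = 0$ for the auxiliary tensors appearing in Theorem \ref{thm1}. Since $s_{ij}$ vanishes identically, so do $s^k_{~i} = a^{kt}s_{ti}$ and $s^i_{~0} = s^i_{~j}y^j$, which immediately yields $s^m_{~0;m} = 0$, i.e.\ condition (ii) of Theorem \ref{thm1} is automatic. Similarly $t_{ij} := s^k_{~i}s_{kj} = 0$, hence $t_{00} = 0$ and $t^m_{~m} = a^{ij}t_{ij} = 0$. Condition (i) of Theorem \ref{thm1}, namely $\overline{{\bf Ric}} = t^m_{~m}\alpha^2 + 2 t_{00}$, therefore collapses to $\overline{{\bf Ric}} = 0$.

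For the converse direction, assuming $F$ is Douglas and $\overline{{\bf Ric}} = 0$, the same computation shows that conditions (i) and (ii) of Theorem \ref{thm1} hold simultaneously, so $F$ is weighted projective Ricci flat with respect to $\alpha$. Combining both directions proves the corollary. There is no genuine obstacle here; once the characterization of Douglas Randers metrics by closedness of $\beta$ is invoked, the statement reduces to a one-line simplification of Theorem \ref{thm1}.
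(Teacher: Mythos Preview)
Your proof is correct and follows essentially the same route as the paper: invoke the Bacso--Matsumoto characterization $s_{ij}=0$ for Douglas Randers metrics, then observe that all the $s$- and $t$-terms vanish. The only cosmetic difference is that the paper substitutes $s_{ij}=t_{ij}=0$ directly into the formula (\ref{PR1}) to obtain $\textbf{WPRic}_0=\overline{{\bf Ric}}$ in one stroke, whereas you route the same vanishing through the two conditions of Theorem~\ref{thm1}; the content is identical.
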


\begin{proof}
Let $F=\alpha+\beta$ be a Douglas metric. Then by \cite{BM1},  $\beta$ is closed, i. e. $s_{ij}=0$. By substituting $s_{ij}=t_{ij}=0$ in (\ref{PR1}) we obtain
\[
\textbf{WPRic}_0=\overline{{\bf Ric}},
\]
which concludes that $F$ is of weighted projective Ricci flat if and only if $\overline{\bf Ric}=0$.
\end{proof}

\bigskip

A Finsler metric $F=F(x, y)$ on a manifold $M$ is called of reversible weighted projective Ricci curvature if $\textbf{WPRic}_0(x, -y)=\textbf{WPRic}_0(x, y)$. In this case, for a Randers metric we get the following.
\begin{cor}
A Randers metric is of reversible weighted projective Ricci curvature if and only if $div(d\beta)=0$.
\end{cor}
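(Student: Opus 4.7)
The plan is to start from the explicit expression
\begin{equation*}
\textbf{WPRic}_0=\overline{{\bf Ric}}+2\alpha s^m_{~0;m}-2t_{00}-\alpha^2 t^m_{~m}
\end{equation*}
derived in the proof of Theorem~\ref{thm1} (see (\ref{PR1})) and to classify each summand by its parity under the involution $y\mapsto -y$. First I would observe that $\overline{{\bf Ric}}(x,y)$, being the Ricci tensor of the Riemannian metric $\alpha$, is homogeneous of degree two in $y$; that $t_{00}=t_{ij}(x)y^iy^j$ is a pure quadratic form; and that $\alpha^2 t^m_{~m}$ is the quadratic $\alpha^2$ times a function of $x$. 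All three contributions are therefore even in $y$. The remaining term $2\alpha\,s^m_{~0;m}=2\alpha\,s^m_{~j;m}(x)y^j$ is the product of the even factor $\alpha$ with the linear, hence odd, factor $s^m_{~j;m}(x)y^j$, so it is odd in $y$.

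Consequently, the reversibility identity $\textbf{WPRic}_0(x,-y)=\textbf{WPRic}_0(x,y)$ holds for every $y\in T_xM_0$ if and only if the odd part vanishes identically, that is, $\alpha\,s^m_{~0;m}\equiv 0$; since $\alpha>0$ on $T_xM_0$, this is equivalent to $s^m_{~j;m}(x)y^j\equiv 0$ on each tangent space, and hence to $s^m_{~j;m}=0$ as a covector field on $M$. This step is the only genuinely Finslerian computation in the argument and it is immediate from (\ref{PR1}).

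It remains to reinterpret $s^m_{~j;m}=0$ as $div(d\beta)=0$. Because the Levi-Civita connection of $\alpha$ is torsion-free, one has $(d\beta)_{ij}=b_{j;i}-b_{i;j}=-2s_{ij}$, so the $2$-form $d\beta$ is (up to a factor of $-2$) the tensor $s_{ij}$. A standard computation of the codifferential on $2$-forms then yields $(\delta\,d\beta)_j$ equal, up to an overall sign and a factor $2$, to $s^m_{~j;m}$; thus the conditions $s^m_{~j;m}=0$ and $div(d\beta)=0$ are equivalent, completing the proof.

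The whole argument reduces to a short parity bookkeeping followed by a standard Hodge-theoretic identification, so I do not foresee any real obstacle. The only point requiring mild care is tracking the sign and normalization relating $s_{ij}$ to the components of $d\beta$ and of its codifferential, but this is purely linear algebra on the Riemannian manifold $(M,\alpha)$ with no further Finslerian input needed.
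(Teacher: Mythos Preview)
Your proposal is correct and is exactly the argument the paper intends: the corollary is stated without proof, but the decomposition of $\textbf{WPRic}_0$ in (\ref{PR1}) into the odd piece $2\alpha s^m_{~0;m}$ and the even remainder is precisely the $Irrat/Rat$ splitting used in the proof of Theorem~\ref{thm1}, and the identification $s^m_{~j;m}=0\Longleftrightarrow div(d\beta)=0$ is the standard one you describe.
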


\begin{rem}
\emph{Let $\Omega$ be a convex domain in $\mathbb{R}^n$ and $F$ a Finsler metric
on $\Omega$. $(\Omega, F)$ is called spherically symmetric if
 $O(n)$ acts  isometrically on $(\Omega, F)$. This
definition is equivalent to say that there exists a positive function $\phi(r,u,v)$ such that $F(x,y)=\phi(|x|,|y|,\langle x,y\rangle)$,  where $|x|=\sqrt{\Sigma^n_{i=1}(x^i)^2}$,
$|y|=\sqrt{\Sigma^n_{i=1}(y^i)^2}$ and $\langle
x,y\rangle=\Sigma^n_{i=1}x^iy^i$. If we fix the Euclidean metric, then the weighted projective Ricci curvature reduced to the projective Ricci curvature. In \cite{ZZ}, Zhu-Zhang characterize projective Ricci flat spherically symmetric Finsler metrics.}
\end{rem}
\section{Proof of the Theorem \ref{thmK}}
In this section, we are going to prove the Theorem \ref{thmK}. Consider a Kropina metric $F ={\alpha^2}/{\beta}$ on an $n$-dimensional manifold $M$. In  \cite{X. Zhang and Y. Shen}, the following are obtained
\[
\sigma_F= \left(\frac{2}{b}\right)^n\sigma_\alpha, \ \ \ \  \sigma_\alpha=\sqrt{det (a_{ij})}.
\]
Therefore, for simplicity we consider  $F_0=\alpha$. In order to prove the Theorem \ref{thmK},  we remark some facts about the Kropina metrics.
\begin{rem}
\emph{In \cite{X. Zhang and Y. Shen},  Zhang-Shen proved that every non-Riemannian Ricci flat Kropina metric is a Berwald metric.}
\end{rem}
\begin{rem}\label{rem1}
\emph{For 1-form $\beta = b_i(x) y^i$ on M, we say that $\beta$ is a conformal form with respect to $\alpha$ if
it satisfies $b_{i;j} + b_{j;i} = \lambda a_{ij}$, where $\lambda= \lambda(x)$ is a function on $M$. For a Kropina metric $F$, the following four conditions are equivalent (see \cite{X}):\begin{description}
        \item[](i) $F$ has isotropic S-curvature ${\bf S} = (n + 1)cF$, where $c = c(x)$ is a function on $M$;
        \item[](ii) $r_{00} = k(x)\alpha^2$, where $k = k(x)$ is a function on $M$;
        \item[](iii) ${\bf S} =0$;
        \item[](iv)  $\beta$ is a conformal form with respect to $\alpha$.
      \end{description}}
\end{rem}

\bigskip

Let $G^{i}$ and $\bar{G}^{i}$ denote the geodesic coefficients of $F$ and $\alpha$, respectively. Then $G^{i}$ and $\bar{G}^{i}$ are related by
\begin{eqnarray} \label{Gi}
G^{i}=\bar{G}^{i} - \frac{F}{2} s^{i}_{\ 0} - \frac{1}{2 b^{2} F} (F s_{0} + r_{00}) (2 y^{i} - F b^{i}).
\end{eqnarray}
Note that, in this case
\[
\sigma_{F}= \left(\frac{2}{b}\right)^{n}\sqrt{det a_{ij}}.
\]
The $S$-curvature of $ F={\alpha^{2}}/{\beta}$ is as follows
\begin{eqnarray}\label{S}
\textbf{S}= \frac{n + 1 }{F b^{2}} \left( F r_{0} - r_{00}\right),
\end{eqnarray}
See Proposition $5.1$ in \cite{X. Zhang and Y. Shen}. In  \cite{X. Zhang and Y. Shen}, the formula for Ricci curvature of a Kropina metric is obtained
\begin{eqnarray}\label{RicK}
\textbf{Ric}= \overline{{\bf Ric}}+\textbf{T},
\end{eqnarray}
where
\begin{eqnarray*}
\textbf{T} \!\!\!\!&:= &\!\!\!\! - \frac{\alpha^{2}}{b^{4}\beta} s_{0}r - \frac{r}{b^{4}}r_{00} + \frac{\alpha^{2}}{b^{2} \beta}b^{m}s_{0;m} + \frac{1}{b^{2}}b^{m}r_{00;m} + \frac{n-2}{b^{2}}s_{0;0} + \frac{n-1}{b^{2}\alpha^{2}}\beta r_{00;0}   \nonumber \\
\!\!\!\!&&\!\!\!\! + \frac{1}{b^{2}} \left( \frac{\alpha^{2}}{\beta} s_{0}+ r_{00}\right)r^{m}_{\ m} - \frac{\alpha^{2}}{\beta}s^{m}_{\ 0;m} -\frac{1}{b^{2}}r_{0;0} - \frac{2(2n -3)}{b^{4}}r_{0}s_{0} - \frac{n-2}{b^{4}}s_{0}^{2} \nonumber \\
\!\!\!\!&&\!\!\!\!  - \frac{4(n-1)}{b^{4}\alpha^{2}}\beta r_{00}r_{0}  + \frac{2(n-1)}{b^{4}\alpha^{2}}\beta r_{00}s_{0} +\frac{3(n-1)}{b^{4}\alpha^{4}}\beta^{2}r_{00}^{2} + \frac{2n}{b^{2}}q_{00} + \frac{1}{b^{4}}r_{0}^{2}  \nonumber \\
\!\!\!\!&&\!\!\!\! - \frac{\alpha^{2}}{b^{2}\beta}q_{0} + \frac{n-1}{b^{2}\beta}\alpha^{2}t_{0} -\frac{\alpha^{4}}{2b^{2}\beta^{2}}s^{m}s_{m} - \frac{\alpha^{2}}{b^{2}\beta}s^{m}r_{0m} - \frac{\alpha^{4}}{4\beta^{2}}t^{m}_{\ m}.
\end{eqnarray*}
Let $F= {\alpha^{2}}/{\beta}$ be a Kropina metric. 
Thus
\begin{eqnarray} \label{0}
\textbf{S}_{\mid m} y^{m} \!\!\!\!&=&\!\!\!\! \textbf{S}_{;m} y^{m} + \left[ F s^{m}_{\ 0} + \frac{1}{b^{2} F} ( F s_{0} + r_{00}) ( 2y^{m} - F b^{m})\right] \frac{\partial \textbf{S}}{\partial y^{m}} \nonumber \\
\!\!\!\!&=&\!\!\!\! \textbf{S}_{;m} y^{m} + F s^{m}_{\ 0} \textbf{S}_{y^{m}} +  \frac{2\textbf{S}}{b^{2} F}( F s_{0} + r_{00}) - \frac{1}{b^{2}} (F s_{0} + r_{00}) b^{m}\textbf{S}_{y^{m}}.
\end{eqnarray}
The following hold
\[
r_{m} s^{m}_{\ 0}= q_{0}, \ \ \ r_{m0} s^{m}_{\ 0}= q_{00}, \ \ \ s^{m}_{\ 0} y_{m}= 0.
\]
From (\ref{S}), we obtain
\begin{eqnarray} \label{1}
&&\textbf{S}_{;m} y^{m}= \frac{(n + 1)}{b^{2}} \left[ r_{0;0} - \frac{r_{00;0}}{F} - \frac{r_{00}^{2}}{F \beta  }  - \frac{2}{Fb^{2}} (r_{0} + s_{0}) (Fr_{0} - r_{00}) \right],
\\
&&F s^{m}_{\ 0} \textbf{S}_{y^{m}}= \frac{(n + 1)}{b^{2}} \left[F q_{0} - 2 q_{00} - \frac{r_{00} s_{0}}{\beta} \right],\label{2}
\\
&&\frac{2\textbf{S}}{b^{2}F} (F s_{0} + r_{00})= \frac{2 (n + 1)}{F^{2} b^{4}} (F s_{0} + r_{00})(F r_{0} - r_{00}),\label{3}
\\
&&\frac{1}{b^{2}} ( F s_{0} + r_{00}) b^{m}\textbf{S}_{y^{m}}= \frac{(n + 1)}{b^{4}} ( F s_{0}+ r_{00})\left[ r - \frac{2}{F^{2}} ( Fr_{0} - r_{00}) - \frac{ r_{00}b^{2}}{\alpha^{2}} \right].\label{4}
\end{eqnarray}
Put
\[
\theta:=d \ln(\Sigma)
\]
which is a $1$-form on $M$. Suppose that
\[
\mathbb{S}:=\frac{1}{n+1}(S+\theta)=\frac{1}{Fb^2}(Fr_0-r_{00})+\frac{\theta}{n+1}.
\]
Then by plugging (\ref{1}), (\ref{2}), (\ref{3}) and (\ref{4}) into (\ref{0}) yields
\begin{eqnarray} \label{S|}
\nonumber(n - 1)\mathbb{S}_{\mid m} y^{m}= \frac{(n - 1)}{b^{2}}\Big[r_{0;0}  - \frac{r_{00;0}}{F} + F q_{0} - 2 q_{00} + \frac{2 }{Fb^{2 } }(Fr_{0} - r_{00})(s_{0} - r_{0})\\
-\frac{4r_{00}^2}{F^2b^2}-\frac{1 }{b^{2}}(Fs_{0} + r_{00}) r\Big]-\frac{(n-1)}{(n+1)}\theta_{|m}y^m.
\end{eqnarray}

\bigskip

\noindent  \textbf{Proof of Theorem \ref{thmK}:} For a Kropina metric, we have
\begin{eqnarray}\label{mnbh}
\textbf{WPRic}_0\!\!\!&=&\!\!\! \textbf{Ric} +(n-1)\mathbb{S}_{\mid m}y^{m} + (n-1)\mathbb{S}^{2}.
\end{eqnarray}
Plugging (\ref{S}), (\ref{RicK}) and (\ref{S|}) into (\ref{mnbh}) yields
\begin{eqnarray}\label{WK1}
\textbf{WPRic}_0\!\!\!&=&\!\!\! \overline{{\bf Ric}} + \frac{n - 2}{b^{4}} \left[b^{2}(r_{0;0}  + s_{0;0}) - ( r_{0} + s_{0})^{2}  \right] +  \frac{n-1}{b^4F}\left[b^{2}F^2t_{0}-4r_0r_{00}\right]\nonumber \\
\!\!\!&&\!\!\! +  \frac{2}{b^{2}} q_{00} -\frac{n F}{b^{4}}s_{0}r - \frac{n}{b^{4}}r r_{00} + ( n - 2) \dfrac{F}{b^{2}} q_{0}  - F s^{m}_{\ 0;m} - \dfrac{F^{2}}{4} t^{m}_{\ m}-\frac{F}{b^2}s^mr_{0m}\nonumber \\
\!\!\!&&\!\!\!     + \frac{F}{b^{2}}b^{m} s_{0;m}  + \frac{1}{b^{2}} b^{m}r_{00;m}  + \frac{1}{b^{2}} (F s_{0} + r_{00}) r^{m}_{\ m} - \frac{F^{2}}{2b^{2}} s^{m}s_{m}-\frac{F}{b^2}s^mr_{0m}\nonumber \\
\!\!\!&&\!\!\! + \dfrac{n-1}{n+1}\Big[ \theta_{|0} +  \dfrac{2 \theta}{F b^{2}} ( F r_{0} - r_{00}) + \dfrac{1}{n+1} \theta^{2} \Big].
\end{eqnarray}
By assumption $\textbf{WPRic}_0=0$. Thus contracting (\ref{WK1}) with $4(n+1)^2b^4F\beta^3$ implies
\begin{eqnarray}\label{WK2}
\!\!\!&&\!\!\! (n+1)^2\Big[4\overline{{\bf Ric}}\ b^4\alpha^2\beta^2 + 4(n - 2) \left[ b^2(r_{0;0}  +  s_{0;0}) - ( r_{0} + s_{0})^{2}  \right]\alpha^2\beta^2 + 4(n-1)[b^2 t_{0}\alpha^4\beta\nonumber \\
\!\!\!&&\!\!\!   -4\beta^3 r_0r_{00}]
+  8b^2 q_{00}\alpha^2\beta^2 -4ns_{0}r \alpha^4\beta - 4nr r_{00}\alpha^2\beta^2 + 4(n - 2)b^2q_{0}\alpha^4 \beta  - 4b^4s^{m}_{\ 0;m}\alpha^4\beta
\nonumber \\
\!\!\!&&\!\!\!  - b^4 t^{m}_{\ m}\alpha^6 + 4b^2b^{m}( s_{0;m}\alpha^4\beta+r_{00;m}\alpha^2\beta^2)  + 4b^2 (\alpha^2 s_{0} + r_{00}\beta) r^{m}_{\ m}\alpha^2\beta
-2b^2 s^{m}s_{m}\alpha^6 \nonumber \\
\!\!\!&&\!\!\! -4b^2\alpha^4\beta s^mr_{0m}\Big]+ 4(n-1)b^2\Big[ (n+1)\big\{b^2\theta_{|0} \alpha^2+  2 \theta (r_{0} \alpha^2 - r_{00}\beta)\big\} + b^4\alpha^2 \theta^{2} \Big]\beta^2=0.\ \ \ \  \ \ \
\end{eqnarray}
It is easy to see that (\ref{WK2}) can be written as follows
\be
\Gamma_6\alpha^6+\Gamma_4\alpha^4+\Gamma_2\alpha^2+\Gamma_0=0,\label{K1}
\ee
where
\begin{align}
\nonumber &\Gamma_6:=-(n+1)^2b^2(b^2 t^{m}_{\ m}+2 s^{m}s_{m}),\\
\nonumber &\Gamma_4:=4(n+1)^2\Big[(n-1)b^2 t_{0}-nrs_{0}+(n - 2)b^2q_{0}-b^4s^{m}_{\ 0;m}+b^2(b^{m} s_{0;m}+s_0r^m_{\ m}-s^mr_{0m})\Big]\beta,\\
\nonumber &\Gamma_2:=4\bigg\{(n+1)^2\Big[\overline{{\bf Ric}}\ b^4+(n - 2) \left[ b^2(r_{0;0}  +  s_{0;0}) - ( r_{0} + s_{0})^{2}  \right] +2b^2 q_{00}-nr r_{00}\\
\nonumber &\quad\quad\quad\quad + b^2(b^{m}r_{00;m}+r^m_{\ m}r_{00})\Big]+ (n-1)b^2\Big[ (n+1)b^2\theta_{|0}+  2(n+1) \theta r_{0}   + b^4 \theta^{2} \Big]\bigg\}\beta^2,\\
&\Gamma_0:= -8(n^2-1)\beta^3   r_{00}[ 2(n+1)r_0 +b^2\theta].\label{K22}
\end{align}
Then, (\ref{K1}) can be written as $(\Gamma_6\alpha^4+\Gamma_4\alpha^2+\Gamma_2)\alpha^2=-\Gamma_0$ which means that $\Gamma_0$ should be a multiple of $\alpha$. It conclude that $r_{00}=\sigma \alpha^2$. Therefore by Remark \ref{rem1}, we have ${\bf S}=0$ and by direct calculation we have:
\begin{eqnarray}
\nonumber &q_{00}=0, \quad q_0=\sigma s_0,\quad s^mr_{0m}=\sigma s_0,\quad r_0=\sigma \beta, \\
&r_{00;0}=\sigma_0 \alpha^2,\quad r^m_{~m}=n\sigma \quad r=\sigma b^2,\quad r_{0;0}=\sigma_0\beta+\sigma^2\alpha^2.  \label{KK}
\end{eqnarray}
By substituting (\ref{KK}) in (\ref{K22}), we obtain:
\begin{eqnarray}
\nonumber &&\Gamma'_4:=-(n+1)^2b^2(b^2 t^{m}_{\ m}+2 s^{m}s_{m}),\\
\nonumber &&\Gamma'_2:=4(n+1)^2b^2\Big[(n-1) t_{0}+(n-3)\sigma s_0 -b^2s^{m}_{\ 0;m}+b^{m} s_{0;m} +[(n-2)\sigma^2 +b^m \sigma_m ]\beta  \big]\beta,\\
\nonumber &&\Gamma'_0:=4\bigg[(n+1)^2\Big[\overline{{\bf Ric}}\ b^4+(n - 2) \left[ b^2(\sigma_0 \beta +  s_{0;0}) - ( \sigma \beta + s_{0})^{2}  \right] \Big]\\
&&\quad \quad \quad \quad + (n-1)b^4 \Big[ (n+1)\theta_{|0}  +  \theta^{2} \Big]\bigg]\beta^2.\label{K11}
\end{eqnarray}
In this case (\ref{K1}) reduces to
\be
\Gamma'_4 \alpha^4+\Gamma'_2 \alpha^2 +\Gamma'_0 =0. \label{KK3}
\ee
Then $\Gamma'_0$ should be a multiple of $\alpha^2$. Since $\beta^2$ cannot be a multiple of $\alpha^2$, then we have
\begin{align}
\nonumber (n+1)^2\Big[\overline{{\bf Ric}}\ b^4&+(n - 2) \left[ b^2(\sigma_0 \beta +  s_{0;0}) - ( \sigma \beta + s_{0})^{2}  \right] \Big]\\
&\ \ \ + (n-1)b^4 \Big[ (n+1)\theta_{|0}  +  \theta^{2} \Big]=\lambda(x)\alpha^2. \label{KK4}
\end{align}
In this case (\ref{K1}) reduces to
\[
(b^2 t^m_{~m}+2s^ms_m)\alpha^2 + (n-1)t_0+(n-3)\sigma s_0+(n-2)\beta \sigma^2 -b^2 s^m_{~0;m}+(\sigma_m \beta+s_{0;m})b^m+\lambda\beta =0.
\]
which means that
\begin{align}
&b^2 t^m_{~m}+2s^ms_m=0,\label{K5} \\
&(n-1)t_0+(n-3)\sigma s_0+(n-2)\beta \sigma^2 -b^2 s^m_{~0;m}+(\sigma_m \beta+s_{0;m})b^m+\lambda\beta=0. \label{K6}
\end{align}
By (\ref{K5}) we obtain
\be
s^ms_m=-\frac{1}{2}b^2\ t^m_{~m}. \label{K10}
\ee
Differentiating both sides of (\ref{K6}) with respect to $y^i$ yields
\be
(n-1)t_i+(n-3)\sigma s_i+(n-2)b_i \sigma^2 -b^2 s^m_{~i;m}+(\sigma_m b_i+s_{i;m})b^m+\lambda b_i=0. \label{K7}
\ee
Contracting (\ref{K7}) with $b^i$ gives
\be
(n-4)b^2t^m_{~m}-2(n-2)b^2\sigma^2+2b^2s^m_{~;m}-2\sigma_m b^mb^2-2\lambda b^2=0. \label{K8}
\ee
By substituting (\ref{K10}) in (\ref{K8}) we have
\be
\lambda=\frac{n-4}{2}\ t^m_{~m}-(n-2)\sigma^2+s^m_{~;m}-\sigma_m b^m \label{K9}
\ee
which completes the proof of Theorem \ref{thmK}.
\qed

\section{Proof of Theorem \ref{THMPS}}
In this section, we are going to prove the Theorem \ref{THMPS}.

\bigskip
\noindent
{\bf Proof of Theorem \ref{THMPS}:}  The Riemannian curvature is defined by
\begin{eqnarray}
R^i_{\ k}(y)=2{\partial G^i \over {\partial x^k}}-{\partial^2 G^i \over
{{\partial x^j}{\partial y^k}}}y^j+2G^j{\partial^2 G^i \over
{{\partial y^j}{\partial y^k}}}-{\partial G^i \over {\partial
y^j}}{\partial G^j \over {\partial y^k}}.\label{Riemann}
\end{eqnarray}
By assumption, there is a local coordinate system $(x^i)$ and a scalar function $P= P(x, y)$ such that $G^i = Py^i$. Putting it in (\ref{Riemann}) implies that
\begin{eqnarray}
\nonumber R^i_{\ k}&=& 2{\partial (Py^i) \over {\partial x^k}}-{\partial^2 (Py^i) \over
{{\partial x^j}{\partial y^k}}}y^j+2Py^j{\partial^2 (Py^i) \over
{{\partial y^j}{\partial y^k}}}-{\partial (Py^i) \over {\partial
y^j}}{\partial (Py^j) \over {\partial y^k}}\\
&=& \Big[2{\partial P \over {\partial x^k}}-{\partial^2 P \over{{\partial x^j}{\partial y^k}}}y^j-P{\partial P \over {\partial y^k}}\Big]y^i
+\Big[P^2 -{\partial P \over {\partial x^j}}y^j\Big]\delta^i_k.\label{Ri}
\end{eqnarray}
Taking a trace of (\ref{Ri}) yields
\be
\textbf{Ric} =(n-1)(P^2-P_0),\label{R1}
\ee
where $P_0:=P_{x^j}y^j$.  For simplicity, let us ut
\[
\eta=\eta_i(x)y^i:=\frac{1}{n+1}d \ln(\Sigma).
\]
By assumption,  $F$ has  isotropic projective weighted Ricci curvature $\textbf{WPRic}_0=(n-1)\sigma F^2$ and isotropic ${S}$-curvature ${\bf S}=(n+1)cF$, where $\sigma=\sigma(x)$ and $c=c(x)$ are scalar functions on $M$. Then by considering (\ref{R1}), the relation (\ref{PRic}) reduces to following
\be
\sigma F^2=(P^2-P_0)+\big[(cF+\eta)^2+(cF+\eta)_{|s}y^s\big].\label{R2}
\ee
Put $c_0:=c_{|s}y^s$ and $\eta_0:=\eta_{|s}y^s$. Then  (\ref{R2}) is rewritten as follows
\be
(\sigma-c^2)F^2-(2c\eta-c_0)F + (P_0-P^2-\eta^2-\eta_0)=0.\label{T1}
\ee
According to (\ref{T1}), we have two main cases as follows:\\\\
{\bf Case (i)}: If $\sigma=c^2$, then (\ref{T1}) implies
\[
F=\frac{P_0-P^2-\eta^2-\eta_0}{2c\eta-c_0}
\]
which is a Kropina metric and is not a regular metric.\\\\
{\bf Case (ii)}: If $\sigma\neq c^2$, then we get
\[
F=\frac{\sqrt{(2c\eta-c_0)^2-4(\sigma-c^2)(P_0-P^2-\eta^2-\eta_0)}+2c\eta-c_0}{2(\sigma-c^2)}
\]
which is a Randers metric. This completes the proof.
\qed

\bigskip
\begin{ex}
\emph{Let $F$ be an Einsteinian Riemannian metric, and $F_0$ be another Riemannian metric on a manifold $M$. Suppose $F$ and $F_0$ have the same volume form. In this case, $F$ is of isotropic projective weighted Ricci curvature.}
\end{ex}

\bigskip

\noindent
 Tayebeh Tabatabaeifar and Behzad Najafi \\
Department of Mathematics and Computer Science\\
Amirkabir University of Technology (Tehran Polytechnic)\\
Tehran. Iran\\
Email:\  t.tabaee@gmail.com\\
Email:\ behzad.najafi@aut.ac.ir\\

\bigskip
\noindent
Akbar Tayebi\\
Faculty  of Science, Department of Mathematics\\
University of Qom\\
Qom. Iran\\
Email:\ akbar.tayebi@gmail.com
\end{document}